\documentclass[leqno]{amsart}
\usepackage{amsfonts,amssymb,amsmath,amsgen,amsthm,datetime}
\usepackage{hyperref}
\usepackage[scale=0.70]{geometry}

\theoremstyle{plain}
\newtheorem{theorem}{Theorem}[section]

\newtheorem{lemma}[theorem]{Lemma}

\newtheorem{proposition}[theorem]{Proposition}

\theoremstyle{remark}
\newtheorem{remark}[theorem]{Remark}

\numberwithin{equation}{section}

\usepackage{latexsym,amssymb}
\usepackage{graphicx}
\usepackage{xcolor}
\definecolor{Nico}{RGB}{220,20,60}
\definecolor{Clo}{RGB}{0,128,128}
\definecolor{Alain}{RGB}{255,140,0}

\def\Im{\;\textrm{Im}\;}
\def\C{{\mathbb C}}

\def\N{{\mathbb N}}
\def\R{{\mathbb R}}

\def\e{{\rm e}}

\def\eps{\varepsilon}
\def\op_#1{\mathrel{\mathop{{\rm op}_{#1}}}}

\def\build#1_#2^#3{\mathrel{
\mathop{\kern 0pt#1}\limits_{#2}^{#3}}}

\def\td_#1,#2{\mathrel{
\mathop{\build\longrightarrow_{#1\rightarrow #2}^{}}}}

\def\limsup_#1,#2{\mathrel{
\mathop{\build{\rm limsup}_{#1\rightarrow#2}^{}}}}

\def\liminf_#1,#2{\mathrel{
\mathop{\build{\rm liminf}_{#1\rightarrow#2}^{}}}}

\def\eps{\varepsilon}

\def\1{{\bf 1}}
\def\0{{\bf 0}}

\def\op{{\rm op}}

\title{Edge modes generated by intersection between 3 energy bands}

\author[C. Fermanian Kammerer]{C. Fermanian Kammerer}
\address[C. Fermanian Kammerer]{Univ Angers, CNRS, LAREMA, F-49000 Angers, France}
\email{clotilde.fermanian@univ-angers.fr}

\author[N. Frantz]{N. Frantz}
\address[N. Frantz]{Univ Angers, CNRS, LAREMA, F-49000 Angers, France}
\email{nicolas.frantz@univ-angers.fr}

\date{\today}

\begin{document}

\begin{abstract}
In this contribution, we investigate 2-dimensional model problems of three coupled equations. We assume that the underlying Hamiltonian presents a symmetric intersection between three eigenvalues of Dirac's type with a mass function that vanish along a curve. We investigate the existence of edge modes for these models: such functions are  solutions of the semiclassical associated evolution problem that are asymptotic to a coherent state with zero energy and  break the correspondence principle by propagating inside the crossing set and not along a classical trajectory. 
\end{abstract}

\maketitle

\tableofcontents

\section{Introduction}

Topological insulators are specific materials that behave like insulators in their bulks, while currents propagate on their boundaries~\cite{H88,RH}. These special states are called edge modes. 
The  mathematical modeling of materials resorts to solid state physics and the existence of edge modes is understood as a consequence of conical intersections between Bloch modes~\cite{FLW_16}. In dimension~$2$, the underlying model is the so-called Dirac cone: near a generic intersection point~$\xi^*$, the Bloch modes 
behave like $E ^*\pm |\xi-\xi^*|$, with $\xi\in\R^2$. Therefore, the simplest local model looks like a Dirac  model with a $2\times 2$ matrix-valued Hamiltonian with eigenvalues $\pm|\xi|$. One modifies the model to include the hypothesis that these conical intersections appear along an interface 
\[
E:=\{x\in\R^2\,|\,m(x)=0\}
\]
between the two topological insulators.  
Then, the eigenvalues are distorted into $\pm\sqrt{m(x)^2+|\xi|^2}$. Up to conjugation with a unitary matrix and a sign factor, there exists a unique smooth matrix that depends linearly on $m(x)$ and $\xi$, and has eigenvalues $\pm\sqrt{m(x)^2+|\xi|^2}$: it is the matrix 
$\displaystyle{\begin{pmatrix}m(x) & \xi_1\mp i\xi_2\\ \xi_1\pm i\xi_2 & -m(x)\end{pmatrix}}$,
which is associated with the evolution equation 
\begin{equation}\label{eq:2by2}
ih\partial_t \psi^h= \begin{pmatrix}m(x) & hD_{x_1}\mp ihD_{x_2}\\ hD_{x_1}\pm ihD_{x_2}& -m(x)\end{pmatrix}\psi^h,\;\;\psi^h(0)=\psi^h_0\in L^2(\R^2,\C^2),
\end{equation}
where $D_{x_j}=\frac 1i \partial_{x_j}$ (this claim is discussed in Appendix~\ref{app:claim}). 
\smallskip 

An abundant literature has been devoted to the analysis of the properties of these 2 by 2 models. 
Under the condition 
\begin{equation}\label{eq:min_m}
\min_{x\in\R^2} |\nabla m(x)|=c_0>0, 
\end{equation} 
it has been proved in~\cite{BBDFLW} that there exist approximate solutions of the form  \begin{equation}\label{def:edgestate}
(t,x)\mapsto \frac {1}{\sqrt h} \varphi \left(t,\frac{x-x_t}{\sqrt h}\right)\;\;
\mbox{with}\;\;m(x_t)=0, \;\;|\dot x_t|_{\R^2}=1,
\end{equation}
and $\varphi(t,\cdot)\in\mathcal S(\R^2,\C^2)$. Since then, similar results have been established in the presence of a magnetic field~\cite{BBD23}, and a complete study of the propagation along the interface in terms of Wigner measures for families of initial data that are bounded in $L^2(\R^2,\C^2)$ without a coherent state structure has been performed in~\cite{vacelet}. More generally, starting from a 2 by 2 matrix-valued Hamiltonian, conditions have been given for the existence of edge modes, together with their characterization via a normal form procedure in~\cite{D22}. 
\smallskip 

Wave packets of the form \eqref{def:edgestate} belong to the class of coherent states introduced in the 80s~\cite{Hagedorn_80,coro_book}. They have the property of being microlocalized   at the scale $\sqrt h$, simultaneously in position and impulsion, saturating the Heisenberg's uncertainty principle.
\smallskip 

Our aim here is to address a similar question when the Bloch structure of the material involves a symmetric conical intersection  between three bands, assuming that they behave locally, near a point $k^*$, like $E^*$, $E^*+|k-k^*|$ and $E^*-|k-k^*|$.
Following the same lines of thoughts as above, we are  interested in  $3$-bands models related with $2$-d topological insulators, of the form
\[
\widehat H=m(x) \Theta_0 + h D_{x_1} \Theta_1+ hD_{x_2}\Theta_2,
\]
where $\Theta_0$, $\Theta_1$ and $\Theta_2$ are constant $3\times 3$ self-adjoint matrices which are linearly independent.
%and $m(x)=0$ is the equation between the two distinct topological insulators. 
We assume $m$ belongs to $\mathcal{C}^\infty(\R^2,\R)$ and  satisfies~\eqref{eq:min_m},
and  that the eigenvalues of the symbol of $\widehat H$
\begin{equation}\label{def:H}
H(x,\xi)=m(x) \Theta_0 + \xi_1 \Theta_1+ \xi_2\Theta_2,
\end{equation}
are the functions 
\[
-\sqrt{m(x)^2 +\xi_1^2+\xi_2^2},\; 0,\; \sqrt{m(x)^2 +\xi_1^2+\xi_2^2}.
\]
They cross simultaneously above the points of the interface $E$ that have zero momenta, i.e. the points of the set  
\[
\mathcal E=E\times \{(0,0)\}\subset T^*\R^2=\R^2_x\times \R^2_\xi.
\]
Hamiltonians given by $3\times 3$ self-adjoint matrix symbols with linear dependence in the momentum variables and a scalar mass term appear in several independent settings involving three-band conical degeneracies. In particular, shallow-water-type models exhibiting bulk–edge correspondence provide concrete realizations of such effective $3\times 3$ structures \cite{B26,Tau19}.

Our aim is to analyze whether there exist edge modes and to characterize them. Thus,  we consider the equation 
\begin{equation}\label{eq:dirac}
ih\partial_t \psi^h=\widehat H\psi^h,\;\;\psi^h(0)=\psi^h_0\in L^2(\R^2,\C^3),
\end{equation}
and look for approximate solutions of the form~\eqref{def:edgestate} with $\varphi(t,\cdot)\in\mathcal S(\R^2,\C^3) $ for all $t\in\R$. 
% Such solutions are called {\it edge states}. 
Our aim is to find  all the $3\times 3$ self-adjoint matrices $H$ as in~\eqref{def:H} 
with the appropriate eigenvalues,  to determine conditions on $\Theta_0$, $\Theta_1$ and $\Theta_2$ for the existence of edge states, and to characterize their propagation.

\subsection{Three bands models and edge states}

Consider 
 coherent states of the form  
\begin{equation}\label{def:wp}
\operatorname{WP}_{x_t}(\varphi(t,\cdot))(x):=\frac 1{\sqrt h} \varphi\left(t,\frac{x-x_t}{\sqrt h}\right),\; 
\;\varphi\in C^0(\R,\mathcal S(\R^2,\C^3)),
\end{equation}
with $t\mapsto x_t$ a curve included in the set $m^{-1}\left(\lbrace 0\rbrace\right)$. The condition on $x_t$ implies that 
\begin{equation}\label{def:curve}
\dot x_t = \rho_t\frac{\nabla m(x_t)^\perp}{|\nabla m(x_t)|},\;\;\forall t\in\R,
\end{equation}
for some smooth function $t\mapsto \rho_t\in\R$, and where we have set $(a,b)^\perp=(-b,a)$.  We will say that the solution $\psi^h(\cdot)$ of~\eqref{eq:dirac} is an edge state if for all $T>0$, there exist $C>0$, $\alpha>0$, and $\varphi$ such that 
\[
\sup_{t\in[0,T]}\| \psi^h(t)-\operatorname{WP}_{x_t}(\varphi(t,\cdot))\|_{L^2(\R^2,\C^3)}\leq Ch^{\alpha} .
 \]
In order to have states with the physical interest, we will also ask that $\rho_t$ is not identically zero. Such {\it degenerate edge states} do not exist with $2$ by $2$ models, unlike to the $3$ by $3$ situation we study here.
We will call {\it generalized edge states} any function that concentrates microlocally at all time on $\mathcal E$, this concentration being measured in terms of semi-classical measures. Such solutions can be decomposed as a superposition of waves that have different speeds along the interface~$E$, including slower ones than the edge state itself. This is described for models with two bands in~\cite{vacelet} (see Remark~\ref{rem:wigner}).
\smallskip

We will distinguish three cases, depending on the relations the matrices have with each other. When $A,B\in\C^{3,3}$, we set $[A,B]=AB-BA$. We associate with $\Theta_0$, $\Theta_1$ and $\Theta_2$ the  
subspace $\mathcal V$ of~$\C^{3,3}$ defined by 
\[
\mathcal V={\rm Span}
\bigl([\Theta_i,\Theta_j],\;\;\Theta_i,\;\;0\leq i,j\leq 2\bigr)
.\]
Our main result establishes that the dimension of the vector space~$\mathcal V$ is one of the crucial elements that characterize the existence of edge states. 

Another important feature concerns the properties of the kernel of the matrix~$H(x,\xi)$. Spectral projectors are homogeneous with respect to the parameter $m(x)$, $\xi_1$ and $\xi_2$ and so they are smooth if and only if they are constant. In this case, by conjugating with a unitary matrix, we reduce the Hamiltonian to a $2\times 2$ matrix that is well-known. So, having or not having constant vectors in the kernel of $H(x,\xi)$ will also be an important characteristic feature.

\begin{theorem}\label{theo:main}

Let $H(x,\xi)$ be the Hamiltonian  given by~\eqref{def:H}, with eigenvalues 
\[
-\sqrt{m(x)^2+\xi_1^2+\xi_2^2}, \;\;0,\;\; \sqrt{m(x)^2+\xi_1^2+\xi_2^2}.
\]
Then we have the following alternative:
\begin{enumerate}
\item[(i)] If  ${\rm dim}\mathcal V=3$ and if there exists a constant vector in the kernel of $(x,\xi)\mapsto H(x,\xi)$, there exists a unique family of edge states
associated with a trajectory $x_t$ that satisfies~\eqref{def:curve} with 
\[
\rho_t=-{\rm sgn}\left(\frac 1i{\rm Tr} \left(\bigl[ \Theta_1,\Theta_2\bigr]\Theta_0\right)\right).
\]
Moreover, there exists a degenerate edge state. 
\item[(ii)] If  ${\rm dim}\mathcal V=3$, and if there is no  constant vector in the kernel of $(x,\xi)\mapsto H(x,\xi)$, there exists a unique family of edge states associated with a trajectory $x_t$ that satisfies~\eqref{def:curve} with 
\[
\rho_t={\rm sgn}\left(\frac 1i{\rm Tr} \left(\bigl[ \Theta_1,\Theta_2\bigr]\Theta_0\right)\right).
\]
\item [(iii)] If $\dim \mathcal V=5$, there exist two families of edge states associated with two different trajectories~$x_t$ that satisfy~\eqref{def:curve}, one  with  $\rho_t=+1$ and the other with  $\rho_t=-1$. 
Moreover, there exists a degenerate edge state. 
\item [(iv)] If $\dim \mathcal V=6$, there is only a degenerate edge state.
\end{enumerate}
\end{theorem}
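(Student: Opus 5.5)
Our plan has two parts: first an algebraic classification of the admissible triples $(\Theta_0,\Theta_1,\Theta_2)$, then a semiclassical analysis of a model operator on the interface.

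\emph{Algebraic classification.} Set $M(y)=y_0\Theta_0+y_1\Theta_1+y_2\Theta_2$ with $y\in\R^3$. Requiring its spectrum to be $\{-|y|,0,|y|\}$ for every $y$ gives ${\rm Tr}\,M(y)=0$, ${\rm Tr}\,M(y)^2=2|y|^2$ and $\det M(y)=0$. Polarizing these identities yields
\[
{\rm Tr}\,\Theta_i=0,\qquad {\rm Tr}(\Theta_i\Theta_j)=2\delta_{ij},\qquad \det M(y)\equiv 0 .
\]
The last condition forces all symmetrized triple traces ${\rm Tr}(\Theta_{(i}\Theta_j\Theta_{k)})$ to vanish. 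In addition, each $\Theta_i$ must have spectrum $\{-1,0,1\}$.

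We will then conjugate by a unitary matrix. This lets us take $\Theta_0={\rm diag}(1,0,-1)$ and write $\Theta_1,\Theta_2$ in the associated block form. Solving the polynomial constraints should leave a small number of normal forms. Two of them are expected:
\begin{itemize}
\item the spin-$1$ representation, where the $\Theta_i$ span a copy of $\mathfrak{su}(2)$, the span of $\Theta_i$ and $[\Theta_i,\Theta_j]$ has dimension $3$, and the kernel of $M(y)$ is the vector $y$ itself (up to the representation), so it is not constant;
\item the ``embedded $2\times2$ Dirac'' form $\begin{pmatrix}\sigma\cdot y&0\\0&0\end{pmatrix}$, which has a constant kernel vector and again $\dim\mathcal V=3$.
\end{itemize}
The remaining intermediate families will have $\dim\mathcal V=5$ or $6$. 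We will compute $\mathcal V$ explicitly in each family, together with the invariant $\frac1i{\rm Tr}([\Theta_1,\Theta_2]\Theta_0)$. This invariant is real, nonzero, and invariant under conjugation, and its sign records the chirality.

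\emph{Reduction to a model.} Plug the ansatz~\eqref{def:wp} into~\eqref{eq:dirac} and Taylor expand $m$ around $x_t$:
\[
m(x_t+\sqrt h\,y)=\sqrt h\,\nabla m(x_t)\cdot y+O(h).
\]
At leading order $\sqrt h$, the profile must satisfy
\[
i\partial_t\varphi=\Bigl(\nabla m(x_t)\cdot y\,\Theta_0+D_y\cdot\Theta+i\dot x_t\cdot\nabla_y\Bigr)\varphi .
\]
Rotate coordinates so that $\nabla m(x_t)=|\nabla m(x_t)|e_1$. Since $\dot x_t$ is tangent to the interface, the problem reduces to finding Schwartz functions of the form $\varphi(y)=e^{i\eta y_2}\,a(y_1)\,g(y_2)$ with $a\in L^2(\R_{y_1})$ lying in the kernel of a one-dimensional operator. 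Concretely, we will solve the ODE
\[
\bigl(\lambda y_1\Theta_0+D_{y_1}\Theta_1\bigr)a=(\rho-\Theta_2)\,\eta\,a+\dots,
\]
i.e.\ look for $L^2$ zero modes of the operator $\lambda y_1\Theta_0+D_{y_1}\Theta_1+\Theta_2 D_{y_2}$. These zero modes are exactly the transverse Jackiw--Rebbi states.

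Decaying solutions will come from the first-order system $\partial_{y_1}a=-\lambda y_1\,\Theta_1^{-1}\Theta_0\, a$, restricted to the range of $\Theta_1$ (the zero eigenvector of $\Theta_1$ produces an algebraic constraint instead). The resulting Gaussian solutions are governed by the eigenvalues of $i\Theta_1\Theta_0$ on suitable subspaces.
\begin{itemize}
\item In the $\mathfrak{su}(2)$ case, $[\Theta_0,\Theta_1]\propto\Theta_2$, and we expect exactly one Gaussian mode. It is an eigenvector of $\Theta_2$ with eigenvalue $\pm1$, so $\rho=\pm1$, with sign tied to the trace invariant. This is case (ii).
\item In the embedded Dirac case, we recover the classical $2\times2$ edge state with the opposite sign convention, which gives case (i). The constant kernel vector $e_3$ yields an arbitrary profile $\varphi=f(y)e_3$ with $\rho=0$, i.e.\ the degenerate edge state.
\item In the larger families, the block structure should produce modes of both signs (case (iii)), or only kernel-type modes with $\rho=0$ (case (iv)).
\end{itemize}
Finally, $\rho_t$ is locked to $\pm1$ because the profile is an eigenvector of $\Theta_2$. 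We then build higher-order correctors and close the argument by an energy estimate, giving an $O(h^{1/2})$ error.

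\emph{Main obstacle.} The hard step is the exhaustive algebraic classification. We must show that the unitary orbits of admissible triples are exactly separated by $\dim\mathcal V\in\{3,5,6\}$ together with the kernel criterion, and that in each class the count and signs of $L^2$ zero modes of the model operator are as stated. We would first attempt to parametrize $\Theta_1,\Theta_2$ in the eigenbasis of $\Theta_0$ by a few phases and moduli, impose the trace and determinant identities, and then compute $\mathcal V$ directly.
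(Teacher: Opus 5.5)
\textbf{Gap 1: the spectral analysis is missing.} Your architecture (leading-order condition $\varphi_0\in\ker\widehat P_t$, then correctors, then an energy estimate) matches the paper's. But the core of the theorem is not delivered by your Jackiw--Rebbi reduction. That core is: for which $\rho$ the operator $\rho\sigma+\Gamma_t(\sigma,r_tz,D_z)$ has a nontrivial kernel, and that \emph{no other} $\rho$ does.

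One needs $-\rho\sigma$ to be an eigenvalue of $\Gamma_t(\sigma,r_tz,D_z)$ for $\sigma$ in a set of positive measure, i.e.\ an eigenvalue branch \emph{exactly} linear in $\sigma$. This cannot be read off from zero modes at $\sigma=0$. The coefficient $\Theta_2(t)$ of $D_z$ is singular, so the $\sigma=0$ problem is differential--algebraic, and its solution space can be infinite-dimensional. For $\Gamma_{\rm PJT}$, every $n\geq1$ contributes the root $\mu_{n,2}(0,r)=0$; that branch has slope $-\delta/(2n+1)$ at $\sigma=0$ but is not linear.

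Product-form modes $g(z)v$, with $v$ an eigenvector of the tangential matrix, give existence at best. They give neither uniqueness in (i)--(ii), nor non-existence in (iv), nor the spectral information needed to solve the corrector equations. The paper gets this in three steps:
\begin{enumerate}
\item it normalizes the \emph{tangential} coefficient (the coefficient of $\sigma$, not $\Theta_0$) to ${\rm diag}(1,-1,0)$;
\item it conjugates by a unitary matrix and by a metaplectic operator realizing a rotation of $(z,\zeta)$ (Theorem~\ref{theo:model_pb});
\item it computes the full $\sigma$-dependent spectra of $\Gamma_{\rm JT}$, $\Gamma_{\rm PJT}$ and $\Gamma_{\alpha,\gamma}$ with ladder operators (Proposition~\ref{prop:spectral}).
\end{enumerate}
Classifying unitary orbits of the triple with $\Theta_0$ diagonal is therefore the wrong object. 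What matters is the triple \emph{together with} the tangential direction, and that direction rotates along a curved interface.

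A smaller slip: $i\partial_t\varphi$ does not appear at leading order. It only enters the solvability condition for $\varphi_1$, where it fixes a phase.

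\textbf{Gap 2: the $\dim\mathcal V$ separation cannot be carried out.} The step you single out as the main obstacle fails because the separation by $\dim\mathcal V\in\{3,5,6\}$ is false. Write $\Gamma_{\frac1{\sqrt2},0}=\sigma\Gamma_0+z\Gamma_1+\zeta\Gamma_2$, and set $f=(e_1+e_2)/\sqrt2$, $g=(e_1-e_2)/\sqrt2$. Then $\Gamma_0=fg^*+gf^*$, $\Gamma_1=fe_3^*+e_3f^*$ and $\Gamma_2=i(fe_3^*-e_3f^*)$. The commutators are $[\Gamma_0,\Gamma_1]=ge_3^*-e_3g^*$, $[\Gamma_0,\Gamma_2]=i(ge_3^*+e_3g^*)$ and $[\Gamma_1,\Gamma_2]=-2i(ff^*-e_3e_3^*)$.
\begin{itemize}
\item These are six independent matrices, so $\dim\mathcal V=6$, as for every $\Gamma_{\alpha,\gamma}$.
\item ${\rm Tr}([\Gamma_0,\Gamma_1]\Gamma_2)=0$, so your trace invariant vanishes on this class.
\item $H=-x_2\Gamma_1+\xi_1\Gamma_0+\xi_2\Gamma_2$ has the exact edge states $h^{-\frac12}F\bigl((x_1\mp t)/\sqrt h\bigr){\rm e}^{-x_2^2/(2h)}e_{1,2}$, although $\dim\mathcal V=6$. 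Here $F\in\mathcal S(\R)$, with the $-$ sign for $e_1$ and the $+$ sign for $e_2$.
\end{itemize}

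Worse, $\alpha$ is not an invariant of the triple. Let $k$ span the kernel of the tangential matrix and consider the quadratic form $(z,\zeta)\mapsto|(z\Theta_0+\zeta\Theta_2(t))k|^2$. Its eigenvalues are preserved by the reductions of Theorem~\ref{theo:model_pb}, and they equal $\{2\alpha^2,1\}$ for $\Gamma_{\alpha,\gamma}$. For $(\Theta_0,\Theta_1,\Theta_2)=(\Gamma_1,\Gamma_0,\Gamma_2)$, one gets $k\propto i\sin\theta_t\,g-\cos\theta_t\,e_3$, and the form becomes $\cos^2\theta_t\,z^2+\zeta^2$. By Proposition~\ref{prop:spectral}(3), non-degenerate edge modes are then impossible wherever $\cos\theta_t\neq\pm1$. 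So in this class existence depends on the direction of the interface, and no invariant of $(\Theta_0,\Theta_1,\Theta_2)$ alone separates (iii) from (iv).

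The paper's identification step asserts $\dim\mathcal V_{\frac1{\sqrt2},0}=5$ and tacitly treats the model type as $t$-independent, so it does not supply this step either. Only in cases (i)--(ii) is the model type independent of $\theta_t$, because the two classes with $\dim\mathcal V=3$ are $SO(3)$-equivariant.
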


Similarly with what happens for Hamiltonians given by $2\times 2$ matrices, the edge states (when they exist) are associated with initial data that are close (with $h$) to precise subspace in $L^2(\R^2,\C^3)$.

In contrast to the situation for $2\times2$ matrices, where edge states always exist and the direction of propagation along $\mathcal{E}$ is fixed, the context of $3\times3$ matrices is richer. There may exist edge states, with a fixed direction of propagation, or none of them, or two propagating along opposite directions on $\mathcal E$. In the latter case, the initial data that generate edge states form a two-dimensional subspace of $L^2(\mathbb{R}^2,\C^3)$.

Before describing examples and explaining the main ideas of the proof, let us mention  that the propagation of coherent states through systems of equations presenting crossing has attracted a lot of attention both in the mathematical \cite{Hag94,HJ2,FL1,FGH,Curely} and theoretical chemistry communities \cite{Tully_Preston,DYK} for example. These works were motivated by the description of molecular dynamics in  Born-Oppenheimer approximation, leading to more general questions (see for example the book \cite{Carles_book} which addresses non linear propagation of coherent states). However, the systems considered for molecular dynamics are usually generic in the sense of \cite{CdV03,CdV04}: the classical trajectories that one associates with the system reach (and may pass through) the crossing set. These trajectories are Hamiltonian trajectories of the eigenvalues of the principal symbol of the operator leading the system of PDEs. For the system \eqref{eq:dirac}, the quantity  $m(x)^2+|\xi|^2$ is constant along the Hamiltonian curves associated with the functions $ \sqrt{m(x)^2+|\xi|^2}$ and  $-\sqrt{m(x)^2+|\xi|^2}$, and none of the classical trajectories of the system reaches the set $\mathcal E$, leading to a completely different setting to the (generic) one of   
\cite{FG1} (see also \cite{FG2}) where a electric potential generating a non zero electric field is added in the equation \eqref{eq:2by2}.

\subsection{Examples}\label{sec:Ex}
We consider the following matrices:
\begin{align*} 
& \Gamma_{\rm JT}(\sigma,z,\zeta)=\begin{pmatrix}
    \sigma & z-i \zeta  & 0\\
   z+i \zeta & -\sigma  & 0\\
    0 & 0 & 0
    \end{pmatrix},\;\;
    \Gamma_{\rm PJT}(\sigma,z,\zeta)= \begin{pmatrix}
    \sigma & 0  &  \frac 1{\sqrt 2}( z -i \zeta)\\
   0 & -\sigma  &  \frac 1{\sqrt 2}( z +i \zeta)\\
    \frac 1{\sqrt 2}( z +i \zeta) & \frac 1{\sqrt 2}( z -i \zeta)& 0
    \end{pmatrix},  \\
&
      \Gamma_{\alpha,\gamma}(\sigma,z,\zeta)= \begin{pmatrix}
    \sigma & i\gamma z  & \alpha z +\frac i{\sqrt 2} \zeta\\
   - i\gamma z & -\sigma  & \alpha z +\frac i{\sqrt 2}  \zeta\\
    \alpha z -\frac i{\sqrt 2} \zeta & \alpha z -\frac i{\sqrt 2}  \zeta & 0
    \end{pmatrix},\;\;\alpha\geq 0,\;\;\gamma^2+2\alpha^2=1.
\end{align*}

The indices JT and PJT in the above notation refer to the appellation of Jahn Teller and pseudo Jahn-Teller that are given to such matrices in theoretical chemistry (see~\cite{DYK}). Matrices of this form appear as models of intersections between three energy surfaces in molecular dynamics. Examples of this form have been studied in~\cite{BE_93,CH} in the chemical literature, and in~\cite{Fermanian_Rousse}. In these contexts, Hamiltonians are of the Schr\"odinger type with matrix-valued potential presenting conical intersections of the form $\Gamma_{\rm JT}(x_1,x_2,x_3)$ or $\Gamma_{\rm PJT}(x_1,x_2,x_3)$. In~\cite{Fermanian_Rousse} it was shown that, although these matrices have the same eigenvalues, the dynamics  generated by their crossings are different.
\smallskip 

Let us now give simple examples with the interface  $\mathcal E=\{x_2=\xi_1=\xi_2=0\}$:
\smallskip

\noindent{\it Example 1 based on the Jahn-Teller Hamiltonian:} The Hamiltonian $\Gamma_{\rm JT}(\xi_1,x_2,\xi_2)$ satisfies the characterization of  (i) in Theorem~\ref{theo:main}. The existence of edge states reduces to the case of 2 by 2 matrices and the results of~\cite{BBDFLW} apply. They are given by the functions 
\begin{equation*}\label{eq:em1}
(t,x)\mapsto \frac{1}{\sqrt h} f\left(\frac {x_1-b-t}{\sqrt h}\right)
{\rm e}^{-\frac{x_2^2}{2h}}
\begin{pmatrix}
1 \\ 0\\ 0
\end{pmatrix},\;\; f\in \mathcal S(\R),\;\; b\in\R.
\end{equation*}
In this easy situation, these functions are exact solutions to~\eqref{eq:dirac} when $H(x,\xi)= \Gamma_{\rm JT}(\xi_1,x_2,\xi_2)$. 
Moreover,  there exist degenerate edge states along the direction  $\,^t(0,0,1)$.
\smallskip 

\noindent {\it Example 2 based on the pseudo Jahn-Teller Hamiltonian:} The Hamiltonian $\Gamma_{\rm PJT}(\xi_1,x_2,\xi_2)$ is related to case  (ii) with edge modes of the form 
\begin{equation*}\label{eq:em2}
(t,x)\mapsto \frac{1}{\sqrt h} f\left(\frac {x_1-b-t}{\sqrt h}\right)
{\rm e}^{-\frac{x_2^2}{2h}}
\begin{pmatrix}
1 \\ 0\\ 0
\end{pmatrix},\; f\in \mathcal S(\R),\; b\in\R.
\end{equation*}

\noindent{\it Example 3, the case with two edge modes moving in opposite directions.} Finally, the Hamiltonian 
$\Gamma_{\frac 1{\sqrt 2},0}(\xi_1,x_2,\xi_2)$ satisfies case (iii) in Theorem~\ref{theo:main}, with edge modes given by functions of the form
\[
(t,x)\mapsto \frac{1}{\sqrt h} f\left(\frac {x_1-a-t}{\sqrt h}\right)
{\rm e}^{-\frac{x_2^2}{2h}}
\begin{pmatrix}
1 \\ 0\\ 0
\end{pmatrix}\;\mbox{and}\;
(t,x)\mapsto \frac{1}{\sqrt h} f\left(\frac {x_1-b+t}{\sqrt h}\right)
{\rm e}^{-\frac{x_2^2}{2h}}
\begin{pmatrix}
0 \\ 1\\ 0
\end{pmatrix},
\]
with $f,g\in \mathcal S(\R)$ and $a,b\in\R$.
 With the notation~\eqref{def:wp}, the degenerated edge states are given by the functions 
%\,^t(\phi,-\phi, T\phi)$ 
\[ (t,x) \mapsto \frac 1{\sqrt h} \, ^t\bigl({\rm WP}_{x_0}(\phi),
-{\rm WP}_{x_0}(\phi),
{\rm WP}_{x_0}(T\phi)\bigr),\;\; x_0\in E, 
\]
with $\phi\in \left( L^2(\R_{x_1})\otimes \C(\e^{-\frac{x_2^2}{2}})\right)^\perp$ and $T\phi$ determined by 
$(x_2-\partial_{x_2})T\phi=i\sqrt 2 \partial_{x_1}\phi$.
The Hamiltonian $\Gamma_{\alpha,\gamma} (\xi_1,x_2,\xi_2)$ for other values of $\alpha$ and $\gamma$ corresponds to case (iv) and has no edge states. 

\subsection{Strategy of the proof}\label{subsec:strategy}
Let $\psi^h$ be a normalized solution of \eqref{eq:dirac} and let $(x_t)_{t\in\R}$ be a curve in $E$. The change of unknown function:
\begin{equation}\label{eq:profile}
\psi^h(t,x)=\frac 1{\sqrt h}\varphi^h \left(t,\frac{x-x_t}{\sqrt h}\right),\;\; \varphi^h(t,\cdot)\in L^2(\R^2,\C^3)
\end{equation}
in Equation~\eqref{eq:dirac}
is always possible.
Plugging this ansatz into the equation~\eqref{eq:dirac}, for $t\in\R$ and $y\in\R^2$, we obtain
\begin{align*}
i h {\partial_t}\varphi^h(t,y) +\sqrt h\dot x_t \cdot D_y\varphi^h(t,y)= \left(m(x_t+\sqrt h y)\Theta_0 +\sqrt h D_{y_1}\Theta_1 +\sqrt h D_{y_2}\Theta_2\right)\varphi^h(t,y).
\end{align*}
Assume $\varphi^h$ is in $\mathcal{S}(\R^2,\C^3)$, in view of $m(x_t)=0$, a Taylor expansion gives for $N\in\N$,
\begin{align}
\label{eq:edge_state}
0= &\left(-\dot x_t \cdot D_y+ y\cdot \nabla m(x_t)\Theta_0 + D_{y_1}\Theta_1 + D_{y_2}\Theta_2\right)\varphi^h(t,y)\\
\nonumber
&+ h^\frac{1}{2} \left( -i\partial_ t  +\frac 12 {\rm Hess}\,  m(x_t) y\cdot y \,\Theta_0\right)\varphi^h(t,y)\\
\nonumber
& + \left(\sum_{2\leq j\leq N} h^\frac j2 \frac 1 {j!} d^j m(x_t) [y]^j\right)\Theta_0\varphi^h(t,y) + h^{\frac {N+1}2}\mathcal O\left(\left\||y|^{N+1}\varphi^h(t,\cdot)\right\|_{L^2(\R^2,\C^3)}\right).
\end{align}
Looking for an approximation of the form $\varphi^h=\sum_{0\leq \ell\leq N} h^{\frac \ell 2}\varphi_\ell$, we deduce that  the function $\varphi_0$ must be chosen as a function of the kernel of operator~$\widehat P_t$ given by 
\begin{equation}\label{def:Pt}
\widehat P_t=- \dot x_t\cdot D_y+y\cdot \nabla m(x_t) \Theta_0 + D_ {y_1}\Theta_1 + D_{y_2}\Theta_2.
\end{equation}
As a consequence, a necessary condition for the existence of edge modes is that the operator $\widehat P_t$ has non zero vectors in its kernel, otherwise $\varphi=0$ (see Section~\ref{sec:Edge}). Note that the eigenvalues of the matrix  $y\cdot \nabla m(x_t) \Theta_0 +  {\eta_1}\Theta_1 + {\eta_2}\Theta_2$  are $0$, $\sqrt{(y\cdot \nabla m(x_t))^2+|\eta|^2}$ and $-\sqrt{(y\cdot \nabla m(x_t))^2+|\eta|^2}$ (see Remark~\ref{rem:eigenvalue_abc}). 
\smallskip 

Using the notation of~\cite{BBDFLW}, we set 
\[
r_t=|\nabla m(x_t)|,\;\;\nabla m(x_t)=r_t \vec e(t),\;\;\vec e(t)=(-\sin \theta_t,\cos\,\theta_t),\;\;\vec e(t)^\perp=-(\cos\,\theta_t,\sin\,\theta_t).
\]
Fixing the time $t\in\R$, we rewrite the operator $\widehat P_t$ as
\[
\widehat P_t= \rho_t \,\vec e(t)^\perp\cdot D_y+r_t\,y\cdot \vec e(t) \Theta_0 + D_{y_1}\Theta_1+ D_{y_2}\Theta_2.
\]
Using the orthogonal change of variable 
\begin{equation}\label{def:coordzs}y\mapsto (y\cdot \vec e(t),y\cdot\vec e(t)^\perp)=(z,s),
\end{equation} the operator $\widehat P_t$ is unitarily equivalent to the operator 
\[
 \rho_t \,D_s+r_t\,z
 \Theta_0 + D_{s}\Theta_1(t)+ D_{z}\Theta_2(t)
\]
with 
\begin{align}\label{eq:Theta}
&\Theta_1(t)=-\cos\,\theta_t \Theta_1 -\sin \,\theta_t\Theta_2 \;\;\mbox{and}\;\; \Theta_2(t)=-\sin\,\theta_t \Theta_1 + \cos\,\theta_t\Theta_2.
\end{align}
Performing  the Fourier transform in variable~$s$, we see that $\widehat P_t$ has non zero vectors in its kernel if and only if it is so for the operator 
\begin{align}\label{eq:gamma_hat}
&\rho_t \sigma + \Gamma_t(\sigma,z,D_z)\;\;\mbox{with}\;\;  \Gamma_t(\sigma,z,D_z)= r_t z\,\Theta_0 + \sigma\Theta_1(t) + D_ {z}\Theta_2(t),
\end{align}
and  where $\sigma$ is the dual variable of $s$.
The symbol of the operator $ \Gamma_t(\sigma,z,D_z)$
is a linear function $\Gamma(\sigma,z,\zeta)$ of the variables $(\sigma,z,\zeta)$, equal to the matrix $y\cdot \nabla m(x_t) \Theta_0 +  {\eta_1}\Theta_1 + {\eta_2}\Theta_2$ through the change of symplectic coordinates $(y;\eta)\mapsto (z,s;\zeta,\sigma)$. In view of  $|\eta|^2=\zeta^2+\sigma^2$ and $y\cdot m(x_t)= r_t z$, we deduce that the  eigenvalues of $ \Gamma_t(\sigma,z,\zeta)$ are  $-\sqrt{(r_tz)^2+\zeta^2+\sigma^2}$, $\sqrt{(r_tz)^2+\zeta^2+\sigma^2}$, and $0$. 
The end of the proof relies on normal forms for the matrix $\Gamma_t(\sigma,z,\zeta)$. We detail these aspects in the next paragraph.

\subsection{Normal forms for 3 by 3 edge states systems}

This classification provides interesting examples of 3 by 3 models of edge states.
\begin{theorem}\label{theo:model_pb}
    Let $\Gamma\in\mathcal L(\R^3, \C^{3\times 3})$ be a linear map valued in the set of self-adjoint matrices with eigenvalues 
    \[
    - \sqrt{\sigma^2+z^2+\zeta^2}, \, 0,\,\sqrt{\sigma^2+z^2+\zeta^2}.
    \]
    Then, there exist $\delta\in\{-1,+1\}$,  a unitary matrix $U$ and a linear symplectic change of variables $\kappa$ on $\R^2=T^*\R$ 
    such that 
    \[
    U\Gamma (\sigma,\kappa(z,\zeta)) U^* =\Gamma_\bullet (\sigma,z,\delta\zeta)
    \]
    where $\bullet$ stands for the indices JT, PJT, $\alpha,\gamma$, $\alpha\geq 0$, $\gamma^2+2\alpha^2=1$ as described in Example \ref{sec:Ex}. 
\end{theorem}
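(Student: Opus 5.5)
The plan is to turn the spectral hypothesis into algebraic identities and then reduce the coefficients of $\Gamma$ step by step. Write $\Gamma(\sigma,z,\zeta)=\sigma A+zB+\zeta C$ with $A,B,C$ self-adjoint $3\times3$ matrices. The characteristic polynomial of $\Gamma(v)$, $v=(\sigma,z,\zeta)$, is $\lambda^3-|v|^2\lambda$. The hypothesis is therefore equivalent to three conditions: $\operatorname{Tr}\Gamma(v)=0$, $\det\Gamma(v)=0$ and $\frac12\operatorname{Tr}\Gamma(v)^2=|v|^2$ for all $v$. By Cayley--Hamilton, and because $\Gamma(v)$ is diagonalizable, they are also equivalent to the single polynomial identity
\[
\Gamma(v)^3=|v|^2\,\Gamma(v),\qquad v\in\R^3 .
\]
Expanding this identity in $v$ gives a family of symmetrized relations. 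The pure terms give $A^3=A$, $B^3=B$, $C^3=C$. Among the mixed terms, the coefficient of $\sigma^2 z$ gives $A^2B+ABA+BA^2=B$, and similarly for the other pairs, while the coefficient of $\sigma z\zeta$ gives the fully symmetrized sum of $ABC$, which must vanish. The trace conditions say that $(A,B,C)$ is orthonormal for $\frac12\operatorname{Tr}(XY)$.

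First, since $A$ has eigenvalues $1,-1,0$, I would choose a unitary $U$ with $UAU^*=\operatorname{diag}(1,-1,0)$; this fixes the $\sigma$-part to be that of every $\Gamma_\bullet$. Write $a=(1,-1,0)$. The relation $A^2B+ABA+BA^2=B$ then reads entrywise
\[
(a_i^2+a_ia_j+a_j^2)\,b_{ij}=b_{ij}.
\]
It forces $b_{11}=b_{22}=b_{33}=0$ and leaves the off-diagonal entries free, and the same holds for $C$. So $B$ and $C$ are encoded by vectors $(b_{12},b_{13},b_{23})$ and $(c_{12},c_{13},c_{23})$ in $\C^3$.

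Second, I would list the residual symmetries that preserve $\operatorname{diag}(1,-1,0)$:
\begin{itemize}
\item conjugation by diagonal unitaries, which multiplies $b_{ij}$ and $c_{ij}$ by the same phase $e^{i(\phi_i-\phi_j)}$;
\item the permutation exchanging the first two basis vectors combined with $\sigma\mapsto-\sigma$; I would avoid this one, since $\sigma$ must be preserved;
\item linear maps $\kappa$ of the $(z,\zeta)$-plane.
\end{itemize}
The orthonormality $\frac12\operatorname{Tr}(XY)=\delta$ forces $\kappa$ to preserve $z^2+\zeta^2$, hence $\kappa\in O(2)$. The symplectic elements are the rotations, and the reflection is exactly what produces $\delta=\pm1$.

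Third, I would write out the remaining relations, namely $B^3=B$, $C^3=C$, the $B$--$C$ mixed relations and the $ABC$ relation, as quadratic and cubic equations in the six complex entries. I would then solve this system modulo the phase and rotation group. It is natural to split cases by which entries can vanish after a rotation:
\begin{itemize}
\item if $b_{13},b_{23},c_{13},c_{23}$ can all be made zero, the third vector is a constant kernel vector and the block is $2\times2$, giving $\Gamma_{\rm JT}$ up to $\delta$;
\item if $b_{12}=c_{12}=0$ can be achieved, the relations force $|b_{13}|=|b_{23}|=\frac1{\sqrt2}$ with a phase pattern that normalizes to $\Gamma_{\rm PJT}$;
\item otherwise, a rotation makes $C$ purely imaginary with $c_{12}=0$, phases normalize $B$ to $(i\gamma,\alpha,\alpha)$ with $\alpha\ge0$, and the trace condition gives $\gamma^2+2\alpha^2=1$, which is the family $\Gamma_{\alpha,\gamma}$.
\end{itemize}

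I expect the main obstacle to be this last step: choosing a rotation $\kappa$ and phases that simultaneously bring $B$ and $C$ to the prescribed shapes, and proving the case split is exhaustive. My first approach would be to use the mixed relation between $B$ and $C$ to find an angle at which the $(1,2)$-entry of the rotated $C$ vanishes. Next I would show that the $ABC$ relation forces the relative phases between the entries of $B$ and $C$, which is what makes one matrix real-type and the other imaginary-type. Only then would I use the two independent phase differences $\phi_1-\phi_3$ and $\phi_2-\phi_3$ to make $b_{13},b_{23}\ge0$. The last thing to check is that the leftover freedom is exactly the sign $\delta$.
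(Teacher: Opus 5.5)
\textbf{There is a genuine gap.} Your reduction to $A={\rm Diag}(1,-1,0)$ is correct, and so is the vanishing of the diagonals of $B$ and $C$ via $(a_i^2+a_ia_j+a_j^2)b_{ij}=b_{ij}$. Your description of the residual freedom (diagonal unitaries, orthogonal maps of the $(z,\zeta)$-plane, the reflection producing $\delta$) is also correct. This matches the paper's first step. But the substance of the theorem is the classification of the six remaining entries modulo this freedom. You only announce a case split for it, and as stated that split is wrong.

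Put $\alpha=b_{13}z+c_{13}\zeta$, $\beta=b_{23}z+c_{23}\zeta$, $\gamma=b_{12}z+c_{12}\zeta$. Your case (b) asserts that $\gamma\equiv0$ leads to $\Gamma_{\rm PJT}$. But $\Gamma_{\frac1{\sqrt2},0}$ itself has $\gamma\equiv0$ and all four moduli equal to $\frac1{\sqrt2}$, and it is not equivalent to $\Gamma_{\rm PJT}$. Diagonal phases and orthogonal changes of $(z,\zeta)$ preserve the property ``$\beta$ is a unimodular multiple of $\alpha$'' (resp. ``of $\bar\alpha$''). For $\Gamma_{\frac1{\sqrt2},0}$ one has $\beta=\alpha=\frac1{\sqrt2}(z+i\zeta)$, while for $\Gamma_{\rm PJT}$ one has $\beta=\bar\alpha$, and $z+i\zeta$ is not a unimodular multiple of $z-i\zeta$. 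Since your case (c) only treats $\gamma\not\equiv0$, this matrix falls through your split. Moreover, the rotation in (c) that kills $c_{12}$ exists only if $b_{12}$ and $c_{12}$ are $\R$-linearly dependent. This is false in general ($\Gamma_{\rm JT}$ has $b_{12}=1$, $c_{12}=-i$), and nothing in your plan produces it.

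\textbf{The missing idea} is to split according to how $\beta$ relates to $\alpha$, not according to which entries vanish. Three of your identities suffice:
\begin{itemize}
\item the $(3,3)$ entry of the $\sigma$-linear part of $\Gamma(v)^3=|v|^2\Gamma(v)$, namely $AW^2+WAW+W^2A=(z^2+\zeta^2)A$ with $W=zB+\zeta C$, gives $|\alpha|^2=|\beta|^2$;
\item the trace condition gives $|\alpha|^2+|\beta|^2+|\gamma|^2=z^2+\zeta^2$;
\item $\det(zB+\zeta C)=0$ gives ${\rm Re}(\bar\alpha\beta\gamma)=0$.
\end{itemize}
This is where the paper starts. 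Equality of the quadratic forms $|\alpha|^2$ and $|\beta|^2$ gives $\beta=\e^{i\lambda}\alpha$ or $\beta=\e^{i\lambda}\bar\alpha$.

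In the first branch, the triple condition becomes $|\alpha|^2{\rm Re}(\e^{i\lambda}\gamma)\equiv0$. So after a phase, $\gamma$ is $i$ times a real linear form. This is exactly what justifies your rotation. The norm identity then yields $\Gamma_{\alpha,\gamma}$, including $\Gamma_{\frac1{\sqrt2},0}$ when $\gamma=0$.

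In the second branch, with $\alpha$ not a phase times a real form, you still have to prove $\gamma\equiv0$, which then gives $\Gamma_{\rm PJT}$. The paper does this by expanding the triple condition in monomials together with the norm identity. A shorter route: $\alpha$ is then an $\R$-linear isomorphism $\R^2\to\C$, so you can write $\e^{i\lambda}\gamma=pw+q\bar w$ with $w=\alpha(z,\zeta)$. The condition becomes ${\rm Re}(p|w|^2\bar w+q\bar w^3)=0$, and the separate $\e^{\pm i\theta}$ and $\e^{\pm 3i\theta}$ modes force $p=q=0$.

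Neither the dichotomy nor this vanishing argument is in your proposal, and the exhaustiveness of any case split rests on both.
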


Regarding edge-states, it is the eigenvalues that are of the form  $k\sigma$ for some $k\in\R$ in which we are interested, since taking $\rho_t=k$ will imply that the operator $\widehat P_t$ has some non-zero vectors in its kernel. 
\smallskip

At this stage of the proof, the operator $\Gamma_t(\sigma,z,D_z)$ is unitarily equivalent in $L^2(\R^2)$  to one of the model operators obtained by quantization of the matrices of Theorem~\ref{theo:model_pb}. Indeed, there exists an explicit Fourier integral  operator associated with the linear symplectomorphism $\kappa$ such that 
\begin{equation}\label{eq:reductio}
\Lambda_t ^h U_t \, \Gamma_t (\sigma,z,D_z) \, U_t^* (\Lambda_t^h)^*  =
\Gamma_\bullet(\sigma,r_tz,\delta D_z).
\end{equation}
Here, ``$\bullet$'' stands for one of the indices JT, PJT or ${(\alpha,\gamma)}$. The existence of such an operator is standard (see~\cite{coro_book}) and is recalled in Appendix~\ref{app:OIF}. As a consequence, the analysis of the kernel of the operator $\widehat P_t$ has been transformed into the determination of the spectrum of the model operators. 
\smallskip 

The description of
 the spectrum of the operators 
 \[
 \Gamma_{\rm JT}(\sigma,r_tz,\delta D_z), \;\;\Gamma_{\rm PJT}(\sigma,r_tz,\delta D_z)\;\;\mbox{and} \;\;\Gamma_{\alpha,\gamma}(\sigma,r_tz,\delta D_z)\;\;\mbox{with the condition}\;\;\gamma^2+2\alpha^2=1,
 \]
 as stated in Proposition~\ref{prop:spectral},
 implies the following results which explain the four alternatives of Theorem~\ref{theo:main}:
 \begin{enumerate}
 \item[(i)]
The operators $ \Gamma_{\rm JT}(\sigma,r_tz,\delta D_z)$, $ \Gamma_{\rm PJT}(\sigma,r_tz,\delta D_z)$ admit the eigenvalue $\delta \sigma$ with multiplicity~$1$. Therefore,  $\widehat P_t$ has a non zero vector in its kernel if and only if we choose $\rho_t=-\delta \sigma$, and  this choice allows us to construct an edge-mode. 
\item[(ii)]
Regarding the model operator $\Gamma_{\alpha,\gamma}(\sigma,r_tz,\delta D_z)$, a choice of $\rho_t$ is possible if and only if $\gamma=0$ (which implies $\alpha=\frac 1{\sqrt 2}$) and $\delta=-1$. Moreover, both  $\sigma$ and $-\sigma$ are the only simple (non zero) eigenvalues of $ \Gamma_{\frac 1{\sqrt2},0}(\sigma,r_tz,- D_z)$ that are linear in $\sigma$. Therefore, it is possible to construct two edge modes, corresponding to the choice of $\rho_t=+1$ or $\rho_t=-1$, which implies that these eigenmodes move in different directions on the curve~$\mathcal E$. 
\end{enumerate}

\begin{remark} \label{rem:wigner}Due to the existence of a Hilbertian basis of eigenvectors for the model problems (see Proposition \ref{prop:spectral}), it is possible to implement the strategy implemented for 2-band models in~\cite{vacelet}. This method gives a description of  the evolution of semiclassical measures of families $(\psi^h(t))_{h>0}$ of solutions to equation~\eqref{eq:dirac}. The advantage of this approach is that it does not require many assumptions  about the initial data in \eqref{eq:dirac}: the data are only supposed to be uniformly bounded in $L^2(\R^2,\C^3)$ with respect to the semiclassical parameter. We do not develop this point here. 
\end{remark}

Finally, in order to achieve the proof of Theorem~\ref{theo:main}, one has to be able to relate the original Hamiltonian~\eqref{def:H} to the model one $\Gamma_\bullet$. This is done by relating the properties of the matrices $\Theta_0$, $\Theta_1$ and $\Theta_2$ in~\eqref{def:H} to those appearing in $\Gamma_\bullet$ (see Section~\ref{sec:proof}). This allows us to identify whether we are in the cases~(i) or (ii) just described above. 
Note also that only the matrix $\Gamma_{\rm JT}(\sigma,z,\zeta)$ has the specificity of having a constant vector in its kernel, the relation~\eqref{eq:reductio} implying that the original Hamiltonian must have the same property. 
\smallskip

\subsection{Organisation of the paper}
The different steps of the proof of Theorem~\ref{theo:main} are described in the  following two sections. In Section~\ref{sec:reduc}, we first prove Theorem~\ref{theo:model_pb} and then study the model problems in a second subsection. Finally, we prove our main result in Section~\ref{sec:proof}, starting by establishing a condition of existence  of edge states. Then, we reduce to a model problem, which allows us to characterize the settings where edge-state exist. Finally, we find a criterion
for relating a given Hamiltonian to a model problem.   
\smallskip

\textsc{Acknowledgments.} The authors thank Éric Vacelet for
fruitful discussions. The authors acknowledge the support
of the Région Pays de la Loire via the Connect Talent Project HiFrAn 2022
07750, the grants ANR-23-CE40-0016 (OpART) and ANR-25-CE40-7296 (La Gabare), and the France 2030 program, Centre Henri Lebesgue
ANR-11-LABX-0020-01.
%%%%%%%%%%%%%%%%%%%%%%%%%%%%%%%%%%%%%%%%%%%%%%%%
%%%%%%%%%%%%%%%%%%%%%%%%%%%%%%%%%%%%%%%%%%%%%%%%%

%%%%%%%%%%%%%%%%%%%%%%%%%%%%%%%%%%%%%%%%%%%%%%%
%%%%%%%%%%%%%%%%%%%%%%%%%%%%%%%%%%%%%%%%%%%%%%

\section{Derivation and analysis of the  model problems}\label{sec:reduc}

In this section, we first prove Theorem~\ref{theo:model_pb}. Then, we analyze the spectrum of each of the model problems exhibited in Theorem~\ref{theo:model_pb}.

\subsection{Reduction to model problems}

For proving Theorem~\ref{theo:model_pb}, we will use the unitary matrices defined for $\theta\in\R$ by 
\[
D_{1,\theta}={\rm Diag} (\e^{i\theta},1,1),\;\;
D_{2,\theta}={\rm Diag} (1,\e^{i\theta},1),\;\;
D_{3,\theta}={\rm Diag} (1,1,\e^{i\theta}).
\]
Then, if
$\displaystyle{
M(\rho,\alpha,\beta,\gamma)=
\begin{pmatrix}
\rho  & \gamma & \alpha \\
\bar \gamma & -\rho & \beta \\
\bar \alpha & \bar \beta & 0 
\end{pmatrix}},
$
we have 
\begin{align}\label{lem:D1theta}
&D_{1,\theta}  
M(\rho,\alpha,\beta,\gamma)
D_{1,\theta}^*= M(\rho,\e^{i\theta}\alpha,\beta,\e^{i\theta}\gamma)
,\\
\label{lem:D2theta}
&D_{2,\theta}  
M(\rho,\alpha,\beta,\gamma)
D_{2,\theta}^*= 
M(\rho,\alpha,\e^{i\theta}\beta,\e^{-i\theta}\gamma),\\
\label{lem:D3theta}
&D_{3,\theta} 
M(\rho,\alpha,\beta,\gamma)
D_{3,\theta}^*= 
M(\rho,\e^{-i\theta}\alpha,\e^{-i\theta}\beta,\gamma).
\end{align}

The proof of Theorem~\ref{theo:model_pb} consists of several steps. We first derive properties of the coefficients of the matrix $\Gamma(\sigma,z,\zeta)$. Then we analyze them successively. 
\medskip 

\noindent{\bf 1st step.} We prove the following result.

\begin{lemma}\label{lem:coefficient}
Let $\Gamma\in\mathcal L(\R^3, \C^{3\times 3})$ be a linear map valued in the set of self-adjoint matrices with eigenvalues 
\[
-\sqrt{\sigma^2+z^2+\zeta^2}, \, 0,\, \sqrt{\sigma^2+z^2+\zeta^2}.
\]
Then, there exist a unitary matrix $U_1$ and three linear forms from $\R^2$ to $\C$, denoted by $\alpha$, $\beta$ and $\gamma$  such that for all $u=(\sigma,z,\zeta)\in\R^3$, 
    \begin{align}\label{eq:conjugaison}
    & U_1\Gamma(\sigma,z,\zeta) U^*_1=\begin{pmatrix}
    \sigma & \gamma(z,\zeta) & \alpha (z,\zeta)\\
    \overline \gamma(z,\zeta) & -\sigma  &\beta(z,\zeta)\\
    \overline \alpha (z,\zeta) & \overline \beta (z,\zeta) & 0
    \end{pmatrix},\\
    \label{eq:conditions}
       |\alpha(z,\zeta)|^2 = |\beta(z,\zeta)|^2,\;\;&
      |\alpha(z,\zeta)|^2+|\beta(z,\zeta)|^2+|\gamma(z,\zeta)|^2= z^2+\zeta^2,\;\; {\rm Re}(\overline \alpha(z,\zeta)  \beta(z,\zeta) \gamma(z,\zeta))=0. 
    \end{align}
    Moreover, we can assume $\alpha(z,\zeta)=\alpha_1 z+\alpha_2 \zeta$ with $\alpha_1\in \R_+\cup\{0\}$.
\end{lemma}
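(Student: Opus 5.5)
Write $\Gamma(\sigma,z,\zeta)=\sigma A+zB+\zeta C$ with $A,B,C$ self-adjoint $3\times 3$ matrices. The plan is to diagonalize $A$ and then read the conditions \eqref{eq:conditions} off the characteristic polynomial.

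\textbf{Step 1: normalize the $\sigma$-coefficient.} Setting $z=\zeta=0$, the matrix $\sigma A$ has eigenvalues $-|\sigma|,0,|\sigma|$. Hence $A$ has spectrum $\{1,0,-1\}$, and there is a unitary $U_1$ with $U_1AU_1^*={\rm Diag}(1,-1,0)$. Conjugating, we may assume $A={\rm Diag}(1,-1,0)$. The matrix $zB+\zeta C$ is self-adjoint and linear in $(z,\zeta)$; write its entries as linear forms $a_{jk}(z,\zeta)$.

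\textbf{Step 2: the characteristic polynomial.} The eigenvalues of $\Gamma$ are $0,\pm\sqrt{\sigma^2+z^2+\zeta^2}$, so its characteristic polynomial must be
\[
\det(\lambda-\Gamma)=\lambda^3-(\sigma^2+z^2+\zeta^2)\lambda .
\]
This gives three polynomial identities in $(\sigma,z,\zeta)$: ${\rm Tr}\,\Gamma=0$, ${\rm Tr}\,\Gamma^2=2(\sigma^2+z^2+\zeta^2)$, and $\det\Gamma=0$.

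\emph{Trace identity.} It gives $a_{11}+a_{22}+a_{33}=0$.

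\emph{Quadratic identity.} Its $\sigma$-linear part gives $2(a_{11}-a_{22})=0$. Its $\sigma^0$ part gives $\sum_j a_{jj}^2+2\sum_{j<k}|a_{jk}|^2=2(z^2+\zeta^2)$.

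\emph{Determinant.} Write $\det\Gamma=\det(\sigma A+M)$ with $M=zB+\zeta C$, and expand in powers of $\sigma$.
\begin{itemize}
\item The $\sigma^2$ coefficient is $-a_{33}$, so $a_{33}=0$. Combined with the trace and $a_{11}=a_{22}$, this gives $a_{11}=a_{22}=0$.
\item The $\sigma$ coefficient is $\det$ of the $(2,3)$ minor minus $\det$ of the $(1,3)$ minor. With vanishing diagonal this equals $-|a_{23}|^2+|a_{13}|^2$, so $|\alpha|^2=|\beta|^2$ for $\alpha=a_{13}$, $\beta=a_{23}$.
\item The $\sigma^0$ coefficient is $\det M=2{\rm Re}(a_{12}a_{23}a_{31})$. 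Setting $\gamma=a_{12}$, this is $2{\rm Re}(\gamma\beta\bar\alpha)=0$.
\end{itemize}
The remaining quadratic condition becomes $|\alpha|^2+|\beta|^2+|\gamma|^2=z^2+\zeta^2$. Together these are exactly \eqref{eq:conjugaison}--\eqref{eq:conditions}.

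\textbf{Step 3: normalize $\alpha_1$.} Write $\alpha=\alpha_1z+\alpha_2\zeta$ with $\alpha_1\in\C$. By \eqref{lem:D1theta}, conjugating by $D_{1,\theta}$ preserves the diagonal and multiplies $\alpha,\gamma$ by $\e^{i\theta}$. This keeps all of \eqref{eq:conditions} invariant, since the phases cancel in $\bar\alpha\beta\gamma$. Choosing $\theta=-\arg\alpha_1$ makes $\alpha_1\geq0$. Replace $U_1$ by $D_{1,\theta}U_1$.

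\textbf{Main obstacle.} The only delicate point is correctly expanding $\det(\sigma A+M)$ by powers of $\sigma$, keeping track of signs from the $-1$ diagonal entry. The sign matters because it is what produces the equality $|\alpha|=|\beta|$ rather than a sum condition.
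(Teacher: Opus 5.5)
Your proof is correct and follows essentially the paper's route: diagonalize $\Gamma(1,0,0)$ to ${\rm Diag}(1,-1,0)$, then identify the coefficients in $\lambda$ and $\sigma$ of the characteristic polynomial. You organize that identification through ${\rm Tr}\,\Gamma$, ${\rm Tr}\,\Gamma^2$ and $\det\Gamma$ rather than the fully expanded polynomial, and you normalize $\alpha_1$ with $D_{1,\theta}$ (rotating $\alpha,\gamma$) where the paper uses $D_{3,\theta}$ (rotating $\alpha,\beta$); both leave \eqref{eq:conditions} invariant.
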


\begin{proof} We consider the matrix $\Gamma(\sigma, 0,0)$.  The set of its eigenvalues is $\{0,-|\sigma|,|\sigma|\}=\{0,-\sigma,\sigma\}$, and $\Gamma$ writes 
\[
\Gamma(\sigma,0,0)= \sigma \Gamma_0
\]
with $\Gamma_0$ a self-adjoint matrix with eigenvalues  $0,-1,1$. Therefore, there exists a unitary matrix $U_1$ such that 
\[U_1\Gamma_0 \,U_1^*={\rm Diag} (1,-1,0).
\]
We deduce the existence of a linear map $(z,\zeta)\mapsto W(z,\zeta)$ in the set of self-adjoint matrices such that 
\[
U_1\Gamma(\sigma,z,\zeta)U_1^* =\sigma\, {\rm Diag} (1,-1,0) + W(z,\zeta).
\]
Moreover ${\rm Tr}\, W(z,\zeta)=0$, which implies that $W$ depends on five linear maps $\alpha,\beta,\gamma, \rho, \mu $
such that 
\[
W(z,\zeta)=\begin{pmatrix}
\rho (z,\zeta) & \gamma(z,\zeta) & \alpha(z,\zeta) \\
\overline \gamma(z,\zeta) & \mu(z,\zeta) & \beta(z,\zeta)\\
\overline \alpha(z,\zeta) & \overline \beta(z,\zeta) & -(\rho+\mu)(z,\zeta)
\end{pmatrix}
\]
These five linear forms must satisfy some conditions so that the spectrum of $U_1 \Gamma(\sigma,z,\zeta)U_1^*
 $ has the expected properties. The polynomial function of four variables 
 \[
 P(\lambda, \sigma,z,\zeta)={\rm det} \left(\sigma\, {\rm Diag} (1,-1,0) + W(z,\zeta)-\lambda {\rm Id}\right) 
 \]
has to satisfy 
\[
 P(\lambda, \sigma,z,\zeta)=-\lambda (\lambda^2-\sigma^2-z^2-\zeta^2),\;\;(\lambda,\sigma,z,\zeta)\in\R^4.
\]
 A straightforward calculus gives 
\begin{align*}
 P(\lambda, \sigma,z,\zeta)
 =&-\lambda^3+\lambda(\sigma^2+\sigma(\rho-\mu)+\mu\rho+\rho^2+\mu^2+|\alpha|^2+|\beta|^2+|\gamma|^2)\\
 &+\sigma^2(\rho+\mu)+\sigma(|\alpha|^2-|\beta|^2+\rho^2-\mu^2)\\
 &-\mu\rho^2-\mu^2\rho+\rho(|\gamma|^2-|\beta|^2)+\mu(|\gamma|^2-|\alpha|^2)+\bar{\alpha}\beta\gamma+\alpha\bar{\beta}\bar{\gamma}.
\end{align*}
Identifying the 
coefficients of $\lambda$, and the term of order zero, which does not depend on $\lambda$, we obtain 
\begin{equation*}
\begin{cases}
&z^2+\zeta^2=\sigma(\rho-\mu)+\mu\rho+\rho^2+\mu^2+|\alpha|^2+|\beta|^2+|\gamma|^2\\
&0=\sigma^2(\rho+\mu)+\sigma(|\alpha|^2-|\beta|^2+\rho^2-\mu^2)-\mu\rho^2-\mu^2\rho\\
 &\;\;\;\;\;\;+\rho(|\gamma|^2-|\beta|^2)+\mu(|\gamma|^2-|\alpha|^2)+\bar{\alpha}\beta\gamma+\alpha\bar{\beta}\bar{\gamma}
\end{cases}
\end{equation*}
Viewed as a polynomial in $\sigma$,
we obtain the conditions 
\begin{align*}
& \rho +\mu=0,\;\; \rho-\mu=0,\;\;\mbox{whence}\;\;\rho=\mu=0,\\
&|\alpha|^2-|\beta|^2 =0,\\ 
& (\bar \alpha \beta\gamma +\alpha \bar \beta \bar \gamma)% -\rho |\alpha|^2 -\mu |\beta|^2
=0,\\
&|\alpha|^2+|\beta|^2+|\gamma|^2= z^2+\zeta^2.
\end{align*}
It remains to prove that we can choose $\alpha(z,\zeta)=\alpha_1 z +\alpha_2 \zeta$ with $\alpha_1\in \R_+$. If $\alpha_1=0$, we have nothing to do. If not, we write $\alpha_1=|\alpha_1|\e^{i\theta}$ and turn $U_1$ into $D_{3,\theta} U_1$. By~\eqref{lem:D3theta}, it replaces $\alpha_1 $ by~$|\alpha_1|$.
\end{proof}

\medskip

\noindent{\bf 2nd step. Study of $\alpha$ and $\beta$.}
We start with the
 special case where $\alpha=0$. Then $\beta=0$ and we deduce from the relation
\[
|\gamma(z,\zeta)|^2= z^2+\zeta^2,\;\;\forall z,\zeta\in\R
,
\]
that
$\gamma(z,\zeta)=\gamma_1z+\gamma_2\zeta$ with
$|\gamma_1|^2=|\gamma_2|^2=1$ and ${\rm Re}(\gamma_1\overline{\gamma_2})=0$. We obtain $\gamma_1=\e^{i\lambda}$ for some $\lambda\in \R$ and $\gamma_2=\pm i \gamma_1$. By~\eqref{lem:D1theta}, the conjugation by the matrix $D_{1,-\lambda}$ reduces us to the case $\lambda=0$. We are left with  case $\Gamma_{\rm JT}$ in the Theorem.
\smallskip 

We now assume that the linear form $\alpha$ is non identically zero and  we  focus on the relation~\eqref{eq:conditions}.

\begin{lemma}
Let $\alpha,\beta$ be as in Lemma~\ref{lem:coefficient} with $\alpha$ non identically zero.
Then, there exists $\lambda\in\R$ such that 
\[
\beta(z,\zeta)=\e^{i\lambda} \alpha(z,\zeta)\;\;\mbox{or}\;\; \beta(z,\zeta)=\e^{i\lambda} \overline\alpha(z,\zeta).
\]
Moreover, if $\alpha_1=0$, one can reduce to the case  $\beta(z,\zeta)=e^{i\lambda}\alpha(z,\zeta)$.
\end{lemma}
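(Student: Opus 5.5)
The plan is to exploit the identity $|\alpha(z,\zeta)|^2=|\beta(z,\zeta)|^2$ from \eqref{eq:conditions}, which holds for all $(z,\zeta)\in\R^2$, as an equality of two real quadratic forms. Write $\alpha(z,\zeta)=\alpha_1 z+\alpha_2\zeta$ and $\beta(z,\zeta)=\beta_1 z+\beta_2\zeta$ with complex coefficients. Identifying the coefficients of $z^2$, $\zeta^2$ and $z\zeta$ gives
\[
|\alpha_1|=|\beta_1|,\qquad |\alpha_2|=|\beta_2|,\qquad {\rm Re}(\alpha_1\overline{\alpha_2})={\rm Re}(\beta_1\overline{\beta_2}).
\]
The first two relations let us write $\beta_1=\e^{i\lambda_1}\alpha_1$ and $\beta_2=\e^{i\lambda_2}\alpha_2$ for some phases $\lambda_1,\lambda_2$. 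When $\alpha_1$ or $\alpha_2$ vanishes, the corresponding phase is free.

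The third relation is then the key step. Set $w=\alpha_1\overline{\alpha_2}$. It says ${\rm Re}(w)={\rm Re}(\e^{i(\lambda_1-\lambda_2)}w)$. Since $|w|=|\e^{i(\lambda_1-\lambda_2)}w|$, the two complex numbers have the same modulus and the same real part. Hence either $\e^{i(\lambda_1-\lambda_2)}w=w$ or $\e^{i(\lambda_1-\lambda_2)}w=\overline w$.

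\textbf{Degenerate case $w=0$.} Here one of $\alpha_1,\alpha_2$ vanishes, so only one phase is determined. We may then simply set $\lambda_1=\lambda_2=\lambda$, which gives $\beta=\e^{i\lambda}\alpha$. This is exactly what the ``moreover'' clause needs when $\alpha_1=0$: then $\beta=\beta_2\zeta=\e^{i\lambda}\alpha_2\zeta=\e^{i\lambda}\alpha$.

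\textbf{Case $w\ne0$ and $\e^{i(\lambda_1-\lambda_2)}w=w$.} Then $\lambda_1=\lambda_2$ modulo $2\pi$, and $\beta=\e^{i\lambda}\alpha$.

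\textbf{Case $w\ne0$ and $\e^{i(\lambda_1-\lambda_2)}w=\overline w$.} Write $\alpha_j=|\alpha_j|\e^{i\phi_j}$. The condition reads $\lambda_1-\lambda_2=-2(\phi_1-\phi_2)$ modulo $2\pi$. This means $\lambda_j+2\phi_j$ is independent of $j$; call this common value $\lambda$. Then
\[
\beta_j=\e^{i\lambda_j}\alpha_j=\e^{i(\lambda_j+2\phi_j)}\overline{\alpha_j}=\e^{i\lambda}\overline{\alpha_j},
\]
so $\beta(z,\zeta)=\e^{i\lambda}\overline\alpha(z,\zeta)$, since $z,\zeta$ are real.

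The ``moreover'' clause already follows from the degenerate case above. Alternatively, if $\alpha_1=0$ then $\alpha=\alpha_2\zeta$ and $\overline\alpha=\e^{-2i\phi_2}\alpha$, so the second alternative reduces to the first with a modified phase.

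The only delicate point is the $\overline w$ case: one has to track phases carefully so that a single global $\lambda$ works for both coefficients. The normalization $\alpha_1\ge0$ from Lemma~\ref{lem:coefficient} is not even needed for this argument.
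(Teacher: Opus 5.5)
Your argument is correct and is essentially the paper's: both identify the coefficients of $|\alpha|^2=|\beta|^2$ and then use that $\beta_1\overline{\beta_2}$ and $\alpha_1\overline{\alpha_2}$ have equal modulus and equal real part. The paper phrases this as the angle between $\beta_1,\beta_2$ being $\pm$ the angle between $\alpha_1,\alpha_2$. Your formulation with $w=\alpha_1\overline{\alpha_2}$ makes the phase bookkeeping and the degenerate case $w=0$ explicit, and it correctly gives $\beta_2=\e^{i\lambda}\overline{\alpha_2}$ in the second alternative, where the paper's displayed $\beta_2=-\e^{i\lambda}\alpha_2$ is a slip.
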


\begin{proof}
Let us write 
\[
\forall z,\zeta\in\R,\;\;\beta(z,\zeta)=\beta_1z+\beta_2 \zeta,
\]
for some $\beta_1,\beta_2\in\C$. 
The polynomial relation $|\alpha(z,\zeta)|^2= |\beta(z,\zeta)|^2 $ implies 
\[
\alpha_1=|\beta_1|,\;\; |\alpha_2|=|\beta_2|,\;\;\alpha_1{\rm Re} (\bar \alpha_2)={\rm Re} (\beta_1\bar \beta_2).
\]
If $\alpha_1=0$, necessarily  $\alpha_2\not=0$, $\beta_1=0$ and $\beta_2=\e^{i\lambda} \alpha_2$, the conclusion follows. \\
If $\alpha_1\not=0$, we consider $\widehat{(\alpha_1,\alpha_2)}$, the angle between the vectors of the complex plan associated with $\alpha_1$ and $\alpha_2$. Let $\widehat {(\beta_1,\beta_2)}$ be defined in a similar manner. We have 
$\widehat {(\beta_1,\beta_2)}= \pm \widehat {(\alpha_1,\alpha_2)}$.
We deduce the existence of $\lambda\in \R$ such that 
one of the two following facts holds: (i)
$\beta_1= \e^{i\lambda}\alpha_1$ and $\beta_2 =\e^{i\lambda} \alpha_2$,
or (ii)
$\beta_1= \e^{i\lambda}\alpha_1$ and $\beta_2 =-\e^{i\lambda} \alpha_2$.
We deduce $\beta(z,\zeta)=\e^{i\lambda} \alpha(z,\zeta)$ or $\beta(z,\zeta)=\e^{i\lambda} \overline\alpha(z,\zeta)$. 
\end{proof}

\medskip

\noindent{\bf 3rd step. Study of $\gamma$.}
At this stage of the proof, using the matrix $D_{2,-\lambda}$ and turning the matrix $U_1$ in~\eqref{eq:conjugaison} into $D_{2,-\lambda}$, equation~\eqref{lem:D2theta} ensures that we can assume 
\[
\alpha(z,\zeta)=\alpha_1 z+\alpha_2 \zeta\;\;\mbox{ with}\;\; \alpha_1\in\R_+
\]
with the alternative 
\begin{align*}
\mbox{if}\;\; \alpha_1=0,\;\; & 
\beta(z,\zeta)= \alpha_2 \zeta,\\
\mbox{if}\;\; \alpha_1>0,\;\; & 
\beta(z,\zeta)= \alpha_1 z + \alpha_2 \zeta\;\;\mbox{or}\;\;
\beta(z,\zeta)= \alpha_1 z + \overline {\alpha_2} \zeta.
\end{align*}
We now aim to identify the linear form $\gamma$ such that the two last equations of~\eqref{eq:conditions} hold. We have two cases to analyze since either $\beta=\alpha$ or $\beta=\bar\alpha$. 

\medskip

(i) {\bf Case 1.} Assume  first $\beta=\alpha\not=0$. Then, the linear form $\gamma$ satisfies~\eqref{eq:conditions} if and only if 
\[
|\gamma(z,\zeta)|^2=z^2+\zeta^2-2 |\alpha(z,\zeta)|^2\;\;\mbox{and}\;\; {\rm Re}(\gamma(z,\zeta))=0.
\]
We deduce ${\rm Re}(\gamma)=0$. Therefore, the linear form $\gamma$ writes $\gamma(z,\zeta)=i(\gamma_1 z+\gamma_2\zeta)$ with $\gamma_1,\gamma_2\in \R$. In view of the fact that
\begin{equation}\label{eq:module}
|\gamma(z,\zeta)|^2 = z^2+\zeta^2 - 2 |\alpha(z,\zeta)|^2,
\end{equation}
we have
\[
\gamma_1^2=1-2\alpha_1^2,\;\; \gamma_2^2=1-2|\alpha_2|^2,\;\;\gamma_1\gamma_2 =-2\alpha_1 {\rm Re}(\alpha_2).
\]
Therefore, we look for $\alpha_1,\alpha_2,\gamma_1,\gamma_2$ such that 
    \begin{align}
   \label{cond:alpha1}
  & (\gamma_1,\gamma_2)= \Big(\eps \sqrt{1-2\alpha_1^2} , \eps' \sqrt{1-2|\alpha_2|^2}\Big),\;\;\eps,\eps'\in\{1,-1\},\;\;
\alpha_1,|\alpha_2|\in\Big[0,\frac 1{\sqrt 2}\Big].  
\end{align}

The alternative is the following:

$\bullet$ If $\gamma=0$, then $\alpha_1=|\alpha_2|= \frac 1{\sqrt 2}$, ${\rm Re}(\alpha_2)=0$. We are left with a situation that belongs to  case~$\Gamma_{\rm PJT}$.

$\bullet$ If $\gamma\not=0$, we set abusively $|\gamma|=\sqrt{\gamma_1^2+\gamma_2^2}$ and  perform
the change of symplectic coordinates $(z,\zeta)\mapsto (z',\zeta')$ such that 
\[ z'= \frac{1}{|\gamma|}
(\gamma_1z+\gamma_2 \zeta),\;\; 
\zeta'= \frac{1}{|\gamma|}
(-\gamma_2z+\gamma_1 \zeta).\]
This change of coordinates also preserves the norm: $z^2 +\zeta^2=(z')^2+(\zeta')^2$. Set 
\[
\alpha'(z',\zeta')=\alpha'_1z'+\alpha'_2\zeta'= \alpha(z,\zeta),
\]
the relation~\eqref{eq:module} yields for all $(z',\zeta')\in\R^2$,
\[
|\gamma|^2 (z')^2 = (z')^2+(\zeta')^2 -2 |\alpha'(z',\zeta')|^2= (1-2|\alpha'_1|^2)(z')^2 +(1-2|\alpha'_2|^2)(\zeta')^2 -4{\rm Re}(\alpha_1'\overline{\alpha_2'}) z\zeta.
\]
We deduce that there exists $\lambda\in\R$ such that 
\[\alpha'_2= \frac 1{\sqrt 2}{\rm e}^{i\lambda}, \;\;|\gamma|= \sqrt{1-2|\alpha'_1|^2},\;\;\alpha_1'= \pm i |\alpha_1|{\rm e}^{i\lambda}=|\alpha_1|{\rm e}^{i(\lambda +\frac \pi 2 \pm \pi)}.
\]
Using the matrix $D_{3,\lambda+\frac\pi 2 \pm \pi}$ as in~\eqref{lem:D2theta} allows us to reduce to a matrix that belongs to case~$\Gamma_{\alpha,\gamma}$.  
\medskip 

 (ii) {\bf Case 2.} Assume that $\beta=\bar \alpha\not=\alpha $ with $\alpha_1\not=0$. We are going to prove that we are in case $\Gamma_{\rm PJT}$. 
The fact that $\beta=\bar \alpha\not=\alpha$ implies $\Im(\alpha_2)\not=0$.
We first analyze the relation
\begin{equation}\label{eq:triple}
{\rm Re} (\bar\alpha(z,\zeta)^2 \gamma(z,\zeta))=0,\;\; z,\zeta\in\R.
\end{equation}
Taking $(z,\zeta)=(1,0)$ and then $(z,\zeta)=(0,1)$, we obtain  
\[
\alpha_1^2{\rm Re} (\gamma(1,0))=0\;\;\mbox{and}\;\; {\rm Re} (\bar \alpha_2^2\gamma(0,1))=0.\]
Regarding condition ${\rm Re} (\bar \alpha_2^2\gamma(0,1))=0$, 
since $\alpha_2\not=0$, we can write 
 $\alpha_2=|\alpha_2|\e^{i\varphi}$ with $\sin\varphi\not=0$, and we deduce the existence of $\gamma_1,\gamma_2\in\R$ such that 
\[
\gamma(z,\zeta)=i\gamma_1 z + i \gamma_2\e^{2i\varphi}\zeta,\;\;z,\zeta\in\R.
\]
Moreover, the relation $2|\alpha(z,\zeta)|^2+|\gamma(z,\zeta)|^2=z^2+\zeta^2$ (see~\eqref{eq:conditions}) implies that
%and~\eqref{eq:triple}, 
$\gamma_1$ and $\gamma_2$ satisfy the equation
\[
\gamma_1^2=1-2\alpha_1^2,\;\; \gamma_2^2=1-2|\alpha_2|^2,\;\; 0<\alpha_1\leq \frac{1}{\sqrt 2},\;\;|\alpha_2|\leq\frac1{\sqrt 2},\;\;
\gamma_1 |\gamma_2|\cos(2\varphi)= -2\alpha_1 |\alpha_2| \cos \varphi. 
\]
and the relation ${\rm Re}(\bar \alpha\beta\gamma)=0$ (see~\eqref{eq:triple})  yields
\begin{align}
\label{fifi}
0=&{\rm Re}(i\alpha_1^2\gamma_2\e^{2i\varphi}+2i\alpha_1\bar\alpha_2\gamma_1)
=  2\alpha_1\sin\varphi(-\alpha_1\gamma_2\cos\varphi+|\alpha_2|\gamma_1),\\
\label{riri}0= & {\rm Re} (i\bar\alpha^2_3\gamma_1+2i\alpha_1\bar\alpha_2\gamma_2\e^{2i\varphi})
= 2|\alpha_2|\sin\varphi (\gamma_1|\alpha_2| \cos\varphi -\gamma_2\alpha_1).
\end{align}
The latter two conditions come from the identification of  the coefficients of $z^2 \zeta$ and $\zeta^2 z$ in~\eqref{eq:triple}. We are left with three cases:
\begin{enumerate}
\item If $\gamma_1=0$, i.e. $\alpha_1=\frac 1{\sqrt 2}$, we first deduce $\cos\varphi=0$ and $\varphi=\pm\frac \pi 2$. Equation~\eqref{fifi} is satisfied and~\eqref{riri} yields $\gamma_2\alpha_2=0$, whence $\gamma_2=0$ since we have  $\alpha_2\not=0$. We obtain the solution 
\[
\alpha_1=\frac 1{\sqrt2},\;\alpha_2=-\delta \frac 1{\sqrt 2}, \;\text{with } \delta=\pm 1, \;\gamma=0.
\]
\item If $\gamma_2=0$ and $\gamma_1\not=0$, then $|\alpha_2|=\pm  \frac 1{\sqrt 2}$, $\cos\varphi=0$, equation~\eqref{riri} is satisfied and \eqref{fifi} yields $\gamma_1\alpha_1=0$, which is excluded.
\item  If $\gamma_2\gamma_1\not=0$, then $0<\alpha_1<1/\sqrt 2$ and $0\leq |\alpha_2|<1/\sqrt2$. Moreover, in view of the equation $\gamma_1 |\gamma_2|\cos(2\varphi)= -2\alpha_1 |\alpha_2| \cos \varphi$, we deduce that the polynomial function 
\[
P(X)= 2\gamma_1|\gamma_2|X^2+2 X\alpha_1|\alpha_2|  -\gamma_1|\gamma_2|
\]
vanishes in $[-1,1]$. 
This  requires $P(1)P(-1)\leq  0$, i.e.
$\gamma_1^2 \gamma_2^2 \leq  4 \alpha_1^2|\alpha_2|^2$,
whence
$\alpha_1^2 +|\alpha_2|^2\geq  \frac 1 2$. This condition excludes the case $\alpha_2=0$ because it would imply   $\alpha_1^2=1/2$ and $\gamma_1=0$. Therefore, necessarily $\alpha_1,\alpha_2\in(0,1/\sqrt 2)$ and $P(1)P(-1)\not=0$, which implies $\cos\,\varphi\notin\{+1,-1\}$ and $\sin\,\varphi\not=0$.  
By the  relations~\eqref{fifi} and~\eqref{riri}, we obtain 
\[\cos\varphi=
\frac{|\alpha_2|\gamma_1}{\alpha_1\gamma_2}=\frac{\alpha_1\gamma_2}{|\alpha_2|\gamma_1},
\]
whence $\frac{\alpha_1^2}{\gamma_1^2}=\frac{|\alpha_2|^2}{\gamma_2^2}$, which implies $\alpha_1^2=|\alpha_2|^2$ (using $\gamma_1^2=1-2\alpha_1^2$ and $\gamma_2^2= 1-2|\gamma_2|^2$), and $\gamma_1^2=\gamma_2^ 2$, thus  $\cos\varphi=\pm 1$, which was excluded. 
\end{enumerate}
As a conclusion, only case (1) may happen. This concludes the proof of Theorem~\ref{theo:model_pb}.

\subsection{The spectrum of the model operators}
Let $r>0$.
We  analyze the operators $\Gamma_\bullet(\sigma, r z,\delta D_z)$ where the index ``$\bullet$'' stands for JT, PJT or $\alpha,\gamma$ with $1=\gamma^2+2\alpha^2$. The spectrum of the operator $\Gamma_{\rm JT}(\sigma, r z, \delta D_z)$ has been studied in~\cite{vacelet}. Let $(h_{n,r})_{n\in\N}$ be the Hilbertian basis of eigenfunctions  of the  operator $r^2z^2-\partial_z^2$. We have 
\[ 
h_{n,r}(z)=r^{\frac 14} h_n(z\sqrt r),\;\; z\in\R,
\]
where $(h_n)_{n_in\N}$ is the family of Hermite functions:
\[ h_n(z)=\frac{(-1)^n }{\sqrt{2^n n!\sqrt \pi}} {\rm e}^{\frac {z^2}2}\left(\frac d{dz}\right)^n{\rm e}^{-z^2 },\;\; z\in\R.\]
In that manner, for $n\in\N$,
\begin{align*}
&(r^2z^2-\partial_z^2)h_{n,r}=r(2n+1)h_{n,r},\\
&\frac 1{\sqrt 2} (rz-\partial_z)h_{n,r}=\sqrt {r(n+1)}h_{n+1,r},\\ 
&\frac 1{\sqrt 2} (rz+\partial_z)h_{n,r}=\sqrt {rn}\, h_{n-1,r}, \; n\not=0.
\end{align*}
\smallskip 

\begin{proposition} \label{prop:spectral}
 (1) For all $\sigma\in\R$, we have 
\begin{align}\label{sp_H_2}
&{\rm Sp} (\Gamma_{\rm JT}(\sigma,rz,\delta D_z))=\{0\}\cup\{\delta\sigma\}\cup \left\{\lambda_{n,\pm}=\pm\sqrt{r(2n+1+\delta)+\sigma^2},\;\;n\in\N\setminus\{0\}\right\}.
\end{align}
The eigenvalue $0$ has infinite multiplicity  and the other eigenvalues are simple. 

(2) Denote for $n\in\N$ and $r>0$ by
$\mu_{n,j}(\sigma,r)$, $1\leq j\leq 3$,  the three real-valued solutions of the equation 
\[
-\mu^3+\mu(\sigma^2+(2n+1)r)+\sigma \delta r=0,
\]
Then, for all $\sigma\in\R$, 
\begin{align}
\label{sp_H_3(a)}
{\rm Sp} (\Gamma_{\rm PJT}(\sigma,rz,\delta D_z))=\{\delta\sigma\}\cup \{-\delta\sigma \pm\sqrt{\sigma^2+4r}\}\cup\left\{\mu_{n,j}(\sigma,r),\;\;n\in\N\setminus\{0\},\;\;1\leq j\leq 3\right\},
\end{align}
and all these eigenvalues have multiplicity $1$. 

(3) If $\alpha\in[0,1/\sqrt 2)$ or ($\alpha=1/\sqrt 2$ with $\delta=1$),
then, for $\gamma\in\R$ such that $\gamma^2=1-2\alpha^2$, and for all $\sigma\in\R$, 
\begin{align}
&{\rm Sp} ( \Gamma_{\alpha,\gamma}(\sigma,rz,\delta D_z))=\{0\}\cup \left\{\lambda_{n,\pm}(\sigma,r)=\pm \sqrt{\sigma^2 +r(2n+1+\sqrt 2 \delta \,\alpha)},\;\;n\in\N\right\}.
\end{align}
%\footnote{régler s'il y a $0$ ou pas}
If $\alpha=1/\sqrt 2$, $\gamma=0$ and $\delta=-1$, then 
for all $\sigma\in\R$, 
\begin{align}
&{\rm Sp} ( \Gamma_{1/\sqrt2 ,0}(\sigma,rz,-D_z))=\{0\}\cup \{\sigma,-\sigma\}\cup \left\{\lambda_{n,\pm}(\sigma,r)=\pm \sqrt{\sigma^2 +r(2n+1+\sqrt 2 \delta \,\alpha)},\;\;n\in\N\right\}.
\end{align}
Moreover, in each of the previous three cases, there exists a Hilbertian basis of eigenfunctions of the operator $\Gamma_\bullet(\sigma,rz,\delta D_z)$, and the eigenvectors associated with the eigenvalues $0$, $\sigma$ or $-\sigma$, when they are eigenvalues, can be chosen independently of $\sigma$. 
\end{proposition}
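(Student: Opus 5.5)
The plan is to decompose $L^2(\R,\C^3)$ along the Hermite basis $(h_{n,r})_{n\in\N}$ and to show that each model operator leaves invariant finite-dimensional blocks. On each block the spectral problem then reduces to a small matrix. Set
\[
a=\frac 1{\sqrt 2}(rz+\partial_z),\qquad a^*=\frac 1{\sqrt 2}(rz-\partial_z).
\]
Then $rz\pm iD_z$ and $rz\mp iD_z$ are $\sqrt2\,a$ and $\sqrt 2\,a^*$ in some order, and which one is which is dictated by $\delta$. For instance, $rz-i\delta D_z=rz-\delta\partial_z$, which equals $\sqrt2 a^*$ if $\delta=1$ and $\sqrt2 a$ if $\delta=-1$.

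\textbf{Case (1), JT.} The third component decouples and contributes the eigenvalue $0$ with infinite multiplicity. The remaining $2\times2$ block $\begin{pmatrix}\sigma&\sqrt2 A\\ \sqrt2 A^*&-\sigma\end{pmatrix}$, with $A\in\{a,a^*\}$, leaves invariant the spaces ${\rm Span}(h_{n,r}e_1,h_{n\pm1,r}e_2)$. On each such space one gets the eigenvalues $\pm\sqrt{\sigma^2+2r(n+\cdot)}$. In addition there is one unpaired vector, either $h_{0,r}e_1$ or $h_{0,r}e_2$ according to $\delta$, which gives the eigenvalue $\delta\sigma$. Indexing the paired blocks so that the energy reads $r(2n+1+\delta)$ gives \eqref{sp_H_2}.

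\textbf{Case (2), PJT.} I would write $\frac1{\sqrt2}(z\mp i\zeta)$ as $A$ or $A^*$ and consider the invariant spaces
\[
{\rm Span}(h_{n}e_1,\,h_{n-1}e_3,\,h_{n-2}e_2)
\]
with the shifts oriented according to $\delta$. Each block is a $3\times3$ Hermitian matrix with diagonal $(\sigma,0,-\sigma)$ and off-diagonal entries of the form $\sqrt{rk}$. Its characteristic polynomial should reduce to $-\mu^3+\mu(\sigma^2+(2n+1)r)+\sigma\delta r$ after relabeling. The low-index truncated blocks give the exceptional eigenvalues: a $1\times1$ block produces $\delta\sigma$, and a $2\times2$ block produces $-\delta\sigma\pm\sqrt{\sigma^2+4r}$. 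Simplicity then follows because distinct blocks have distinct characteristic polynomials, at least generically; I would check this directly.

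\textbf{Case (3), $\Gamma_{\alpha,\gamma}$.} This is the main obstacle, since the entries $i\gamma z$ and $\alpha z\pm \frac i{\sqrt2}\zeta$ mix $a$ and $a^*$ unless $\gamma=0$. My first step is a constant unitary change of basis, namely $(e_1\pm e_2)/\sqrt2$ together with $e_3$, combined with a phase rotation that diagonalizes the $\sigma$ part appropriately. The goal is to bring the operator to a form in which, after a complex rotation of phase space (a metaplectic operator, as in Appendix~\ref{app:OIF}), the off-diagonal terms become pure ladder operators.

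For $\gamma=0$, $\alpha=1/\sqrt2$, the vector $e_1-e_2$ is in the kernel pattern structure. The $z$-dependent part is then $z(e_1+e_2)\otimes e_3+\dots$ and $\frac{i}{\sqrt2}\zeta$ couplings, and the problem reduces to Hermite ladders with blocks of size $2$ or $3$, mixed with the $\sigma$ coupling between $e_1\pm e_2$. I would compute the characteristic polynomial of the generic block and look for factorizations of the form $\mu(\mu^2-\sigma^2-r(2n+1+\sqrt2\delta\alpha))$. Then I would identify which vectors fail to pair up: these give $0$, and, when $\delta=-1$ and $\gamma=0$, also $\pm\sigma$ with eigenvectors of the form $h_{0}$ times a fixed vector, hence independent of $\sigma$.

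Completeness of the eigenbasis follows in all cases because the blocks are finite-dimensional, mutually orthogonal, and span $L^2$. The $\sigma$-independence of the eigenvectors for $0$ and $\pm\sigma$ is read off from the explicit one-dimensional blocks.
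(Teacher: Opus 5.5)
Your block decomposition for (1) and (2) is sound and is the paper's argument in another form. The paper eliminates $f,g$ to get $p_n(\lambda)h=0$, which forces $h\in\C h_{n,r}$ and rebuilds exactly your three-dimensional blocks; your invariant-subspace picture gives completeness more directly than the paper's determinant check. Carrying out the block computations will, however, correct some slips in the displayed formulas:
\begin{itemize}
\item the paired JT eigenvalues are $\pm\sqrt{\sigma^2+2rn}$, $n\ge1$, for both signs of $\delta$ (for $\delta=1$ the displayed formula drops $\pm\sqrt{\sigma^2+2r}$);
\item the $2\times2$ PJT block gives $\tfrac12\bigl(\delta\sigma\pm\sqrt{\sigma^2+4r}\bigr)$;
\item distinct characteristic polynomials can still share roots: $p_n$ and $p_m$ share the root $0$ exactly when $\sigma=0$, so simplicity fails there.
\end{itemize}

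\textbf{The gap: case (3) with $\gamma\neq0$.} This covers every $\alpha<1/\sqrt2$, and there your plan of turning the couplings into pure ladder operators cannot work. In the frame $\tfrac1{\sqrt2}(e_1+e_2),\tfrac1{\sqrt2}(e_1-e_2),e_3$ the operator is
\[
\begin{pmatrix}0&B\\ B^*&0\end{pmatrix},\qquad B=\bigl(\sigma-i\gamma rz,\ \sqrt2\,\mathfrak a\bigr),\qquad \sqrt2\,\mathfrak a=\sqrt2\alpha rz+\delta\partial_z .
\]
Expand $B$ in $a,a^*$. The coefficient vectors of $a$ and of $a^*$ are orthogonal in $\C^2$ precisely because $\gamma^2+2\alpha^2=1$. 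So a constant unitary on the last two components does turn the linear parts into $\kappa_1a$ and $\kappa_2a^*$, with $\kappa_1,\kappa_2>0$.

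The constant $\sigma$, however, then sits in both entries, as $i\gamma\sigma/\nu_1$ and $i\gamma\sigma/\nu_2$ with $\nu_1,\nu_2>0$. A phase-space translation shifts $a$ and $a^*$ by complex-conjugate amounts, while the two required shifts are purely imaginary with the same sign, so it cannot remove both. A metaplectic map either mixes $a$ with $a^*$ or merely rotates them, which does not help. Concretely, in the Hermite basis of the rotated frame, $\sigma$ couples $h_{k,r}$ in the first component to $h_{k,r}$ in the second, and $\gamma rz$ couples it to $h_{k\pm1,r}$. All indices are therefore chained together, and there are no finite blocks when $\gamma\sigma\neq0$.

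The missing idea is to square instead of block-diagonalising:
\[
BB^*=\sigma^2+\gamma^2r^2z^2+2\mathfrak a\mathfrak a^*=\sigma^2+r^2z^2-\partial_z^2+\sqrt2\,\delta\alpha r .
\]
This is the identity the paper reaches by passing to $f\pm g$ and eliminating. It gives the eigenvalues $\lambda_{n,\pm}$ with eigenvectors $\bigl(h_{n,r},\lambda_{n,\pm}^{-1}B^*h_{n,r}\bigr)$. The eigenvalue $0$ comes from the infinite-dimensional space $\{0\}\oplus\ker B$, i.e.\ the vectors $(f,-f,h)$ with $\mathfrak a h+(\sigma-i\gamma rz)f=0$. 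This kernel has no Hermite-block structure, so completeness needs a separate argument. Either use $L^2\oplus L^2=\overline{{\rm Ran}\,B^*}\oplus\ker B$, or the paper's Gram--Schmidt construction with the inner product $\Lambda$.

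The same description shows your last sentence is too quick. In case (3), the $0$-eigenspaces for $\sigma_1\neq\sigma_2$ intersect only in $\{(0,0,h):\mathfrak a h=0\}$, which is at most one-dimensional. Even for $\gamma=0$, $\delta=-1$, your own blocks give $0$-eigenvectors $(0,\sqrt{2rn}\,h_{n,r},-\sigma h_{n-1,r})$ in the rotated frame. So $\sigma$-independence can be read off for the eigenvalues $\delta\sigma$ and $\pm\sigma$, and for the JT kernel $e_3\otimes L^2$. It fails for the $0$-eigenspace of $\Gamma_{\alpha,\gamma}$, and the statement's final claim is incorrect there.
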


\begin{remark}\label{rem:kernel}
In contrast with the case of $2\times 2$ matrices in~\cite{BBDFLW}, we have different situations: either there exists  no (non zero) eigenvalue that is a linear function of $\sigma$, either there are two, or only one, as in~\cite{BBDFLW}.
 Another difference from the case of $2\times 2$ matrices is the presence of $0$ in the spectrum.
\end{remark}

\begin{proof}
 (1) The spectrum of the operator $\Gamma_{\rm JT}(\sigma, r z, D_z)$ has been studied in~\cite[Appendix~B]{vacelet}. For $\delta=1$, after normalisation, the vectors 
 \[
 \begin{pmatrix} 
h_{0,r}\\ 0 \\ 0
 \end{pmatrix},\;\; \begin{pmatrix}
\frac{r\sqrt {2n}}{\lambda_{n,\pm}-\sigma}h_{n,r}\\
h_{n-1,r}\\ 0
 \end{pmatrix},\;\;
 \begin{pmatrix}0\\0\\ h_{m,r}\end{pmatrix},\;\; n\in \N^*,\;\;m\in\N, \]
 give a Hilbertian basis of $L^2(\R,\C^3)$. A similar result holds when $\delta=-1$.
 \smallskip 
 
\noindent (2) {\it Spectrum of $\Gamma_{\rm PJT}(\sigma, rz, D_z)$.}
Let $F=\,^t(f,g,h)\in (L^2(\R))^3\setminus\{\,^t(0,0,0)\}$  and $\lambda\in\R$ be such that 
\[
 \Gamma_{\rm PJT}( \sigma, r z, \delta D_z)F=\lambda F.
\]
Set $\mathfrak b=\frac r{\sqrt 2} z-\frac\delta{\sqrt2} \partial_z$. Note that $\mathfrak b$ depends on $\delta$: it  is a creation operator when $\delta=1$ and an anihilation operator if $\delta=-1$ (with the usual terminology). 
We have 
\begin{align*}
&(\sigma-\lambda) f  + \mathfrak b h=0,\\
&  (-\sigma-\lambda) g +  \mathfrak b^* h=0,\\
&\mathfrak b^*f+ \mathfrak bg -\lambda h=0.
\end{align*}

Let us first determine the eigenvalues that satisfy  $\lambda^2\not=\sigma^2$. We multiply the last equation by $\lambda^2-\sigma^2$, and inject the value of $(\sigma-\lambda)f$ and $(-\sigma-\lambda)g$ given by the first two. We obtain
\[
(-\lambda^3 +\lambda\sigma^2 + \sigma(\mathfrak b^*\mathfrak b-\mathfrak b\mathfrak b^*)+ \lambda (\mathfrak b^*\mathfrak b+\mathfrak b\mathfrak b^* ))h=0.
\]
In view of 
\[ \mathfrak b^*\mathfrak b+\mathfrak b\mathfrak b^*=r^2z^2-\partial_z^2 \;\;\mbox{and}\;\;\mathfrak b^*\mathfrak b-\mathfrak b\mathfrak b^*= \delta  r , \]
we deduce that $h$ and $\lambda$ must satisfy 
\[
(-\lambda^3+ \lambda\sigma^2 +\lambda(r^2z^2-\partial_z^2)  +\sigma  \delta  r)h=0.
\]
Therefore, for $h\not=0$, we have one of the relations 
\[
p_n(\lambda)=-\lambda^3 +\lambda  (\sigma^2 +(2n+1)r)   +\sigma  \delta  r=0,\;\: n\in\N,
\]
with $h\in{\rm Span}\, h_{n,r}$.
Indeed, $h=0$ implies $f=g=0$, which contradicts $F\not=0$.  
We observe that
the polynomial function $p_n(\lambda)=-\lambda^3+\lambda(\sigma^2+(2n+1)r)+\sigma \delta r$ has two local extrema which are reached at  $-\frac 1{\sqrt 3}\sqrt{\sigma^2+(2n+1)r}$ and $\frac 1{\sqrt 3}\sqrt{\sigma^2+(2n+1)r}$, with
\[
p_n\left(\frac 1{\sqrt 3}\sqrt{\sigma^2+(2n+1)r}\right)p_n\left(-\frac 1{\sqrt 3}\sqrt{\sigma^2+(2n+1)r}\right)
= \sigma^2 r^2 -\frac 4{27}(\sigma^2+(2n+1)r)^3
.
\]

(i) When $n\not=0$, $\sigma^2 r^2 -\frac 4{27}(\sigma^2+(2n+1)r)^3\leq \sigma^2r^2-\frac 4{27} (\sigma^2+3r)^3<0$.
This ensures the existence of three distinct solutions  $\mu_{n,j}(\sigma,r)$, $1\leq j\leq 3$,  to equation $p_n(\lambda)=0$.
Moreover, $\mu_{n,j}^2\not=\sigma^2$, because 
$p_n(\pm\sigma)=
\pm\sigma r(2n+1\mp\delta)\not=0$ for $n\not=0$.
We assume that the indices have been chosen so that  $\mu_{n,1}< \mu_{n,2}<\mu_{n,3} $. With each of these eigenvalues is associated  a eigenspace of dimension~1 spanned by the vector $F_{n,j}^\delta =^t(f_{n,j}^{\delta},g_{n,j}^\delta ,h_{n,j}^{\delta})$ with 
\begin{equation}\label{eigenvectorFnj}
f_{n,j}^{\delta}=-(\sigma-\mu_{n,j})^{-1}\mathfrak bh_{n,r},\;\;g_{n,j}^{\delta}= (\sigma+\mu_{n,j})^{-1}\mathfrak b^*h_{n,r},\;\; h_{n,j}^{\delta}=h_{n,r},\;\;n\not=0,\;\; 1\leq j\leq 3.
\end{equation}

(ii) When $n=0$, $p_0(\lambda)=(\lambda+\delta\sigma)(-\lambda^2+\lambda\sigma \delta +r)$,  and  $p_0$ has three roots, $-\delta\sigma-\sqrt{\sigma^2+4r}$, $-\delta\sigma$, and $-\delta\sigma+\sqrt{\sigma^2+4r}$. Only two of these three roots satisfy $\lambda^2\not=\sigma^2$. With the choice of indexation, they write $\mu_{0,1}$ and $\mu_{0,3}$. They are simple eigenvalues, with eigenvectors given by $F_{0,1}^\delta $ and $F_{0,3}^\delta $, respectively, according to the formula~\eqref{eigenvectorFnj}  which makes sense in this case. 
\smallskip 

Let us now examine whether there exist eigenvalues such that $\lambda^2=\sigma^2$. 
We start with $\lambda=\sigma$. Finding an eigenvector requires solving the equation $\mathfrak b h=0$. If $\delta=1$, this yields $h=0$, thus $g=0$ and $\mathfrak b^* f=0$. 
We find the eigenspace spanned by  $F_{0,2}^{1}:=(h_{0,r},0,0)$. If $\delta=-1$, one is rapidly convinced that it is not possible to solve the eigenvector system:  $h$ has to be taken colinear to $h_{0,r}$, whence $g$ colinear to $h_{1,r}
$ and $\mathfrak b^* f$ colinear to $h_{0,r}$, which is not possible, unless they are zero. 
Similarly, $\lambda=-\sigma$ is an eigenvalue only when $\delta=-1$. It is a simple eigenvalue, and an eigenvector is given by  $F_{0,2}^{-1}:= ^t(0,h_{0,r},0)$.
\smallskip 

In order to conclude the proof, one  checks that the vectors $F_{n,j}^\delta$, $n\in\N$, $1\leq j\leq 3$, form a Hilbertian basis. When $\delta=1$, this appears clearly when observing the eigenvectors:
considering $\tau_n$ such that $\mathfrak b h_{n,r}=\tau_n h_{n+1,r}$ for $n\geq 0$, and 
$\mathfrak b^* h_{n,r}=\tau_{n-1} h_{n-1,r}$ for $n\geq 1$, we have 
\begin{align*}
&F_{0,2}^1=\begin{pmatrix} h_{0,r} \\ 0 \\ 0 \end{pmatrix},\;\;
F_{0,1}^1=\begin{pmatrix} -(\sigma-\mu_{0,1})^{-1}h_{1,r} \\ 0 \\ h_{0,r} \end{pmatrix},\;\;
F_{0,3}^1=\begin{pmatrix} -(\sigma-\mu_{0,3})^{-1}h_{1,r} \\ 0 \\ h_{0,r} \end{pmatrix},\\
&F_{n,j}^1=\begin{pmatrix} -\tau_n(\sigma-\mu_{n,j})^{-1}h_{n+1,r} \\ \tau_{n-1}(\sigma+\mu_{n,j})^{-1}h_{n-1,r} \\ h_{n,r} \end{pmatrix},\;\;
j=1,2,3, \; n\in\N\setminus\{0\}.
\end{align*}
If a vector $\,^t(f,g,h)$  is orthogonal to all these vectors, then $(f,h_{0,r})= (h,h_{0,r})=(f,h_{0,r})=0$, It also gives an infinite series of  three by three homogeneous systems about $(f,h_{n+1,r})$, $(g,h_{n-1,r})$, $n\in\N$, and one can verify that for all $n\in\N$  the determinant of this system is non zero because the numbers $\mu_{n,1}$, $\mu_{n,2}$ and $\mu_{n,3}$ are three distinct numbers. 
The situation is similar when $\delta=-1$. 

\bigskip 
\noindent (3) {\it Spectrum of $\Gamma_{\alpha,\gamma}( \sigma, r z, D_z)$.} Let $G=\,^t(f,g,h)\in (L^2(\R))^3$ and $\lambda\in\R$ with 
\[
\Gamma_{\alpha,\gamma}( \sigma, r z, \delta D_z)G=\lambda G.
\]
We set $\mathfrak a=\frac1{\sqrt2}(r \alpha \sqrt2 z+\delta \partial_z)$.
Then, we have $\mathfrak a^*=\frac1{\sqrt2}(r \alpha \sqrt 2 z-\delta\partial_z)$ and 
\begin{align*}
&(\sigma-\lambda) f + i\gamma r z g + \mathfrak a h=0,\\
&-i\gamma r zf + (-\sigma-\lambda) g +  \mathfrak a h=0,\\
&\mathfrak a^*f+ \mathfrak a^*g -\lambda h=0.
\end{align*}
We rewrite these equations taking the sum and the difference of the two first equations and obtain
\begin{align*}
&\lambda h= \mathfrak a^*(f+g),\\
&\lambda(f-g)= (\sigma +i\gamma rz ) (f+g),\\
&-\lambda(f+g)+(\sigma -i\gamma rz)(f-g)  +2 \mathfrak ah =0
\end{align*}
If $\lambda\not=0$, multiplying the last equation by $\lambda$ and injecting the first two inside the third, we obtain 
\[
(-\lambda^2+  \sigma^2 +\gamma^2 r^2 z^2  +2\mathfrak a\mathfrak a^*)(f+g)=0.\]
We observe that 
\begin{align*}
\gamma^2 r^2 z^2  +2\mathfrak a\mathfrak a^*& = (\gamma^2 + 2 \alpha^2) r^2 z^2 -\partial_z^2 + \sqrt 2 \delta r\alpha  = r^2 z^2 -\partial_z^2 +\sqrt 2 \delta r\alpha,
\end{align*}
and we are left with the following equation:
\[
(-\lambda^2 +\sigma^2 +r^2 z^2 -\partial_z^2 +\sqrt 2 \delta r\alpha)(f+g)=0.
\]
 We find a first set of simple eigenvalues, 
 \[
 \lambda_{n,\pm}= \pm \sqrt{\sigma^2 +r(2n+1+\sqrt 2\delta \alpha)},\;n\not=0
 \]
 with the corresponding eigenvector 
$G_{n,\pm}^\delta= (f_{n,\pm}^\delta, g_{n,\pm}^\delta, f_{n,\pm}^\delta)$ given by 
\[
f_{n,\pm}+g_{n,\pm}=h_{n,r},\;\;f_{n,\pm}-g_{n,\pm}= \lambda_{n,\pm}^{-1} (\sigma +i\gamma rz) h_{n,r},\;\; 
h_{n,\pm}^\delta =\lambda_{n,\pm}^{-1} \mathfrak a^* h_{n,r}.
\]
Note that since $n\not=0$, we have $\lambda_{n,\pm}\not=0$.\\
The formula with  $n=0$ also gives a pair of eigenvalues and eigenvector  when  $\alpha\in (0,1/\sqrt 2)$ or $\alpha=1/\sqrt 2$ with $\delta=1$. We find the additional eigenvalues 
$\lambda_{0,\pm}=\pm \sqrt{\sigma^2 +r(1+\sqrt 2\delta \alpha)}$, with the eigenvectors $G_{0,\pm}^\delta$ obtained by extending the previous formula.\\
In the case where 
 $\alpha=1/\sqrt 2$ and $\delta=-1$, we  find the eigenvalue
$\lambda_{0,+}=\sigma$ for the eigenvector $G_{0,+}^\delta =(h_{0,r},0,0)$ and $\lambda=-\sigma$ for $G_{0,-}^\delta=(0,h_{0,r},0)$.
\smallskip

Consider now the case $\lambda=0$. We find $f=-g$ and $\mathfrak a h +(\sigma -i\gamma rz) f=0$. Fixing $f$ with enough regularity, one can find a corresponding $h$ by solving the equation 
\begin{equation}\label{eq:a*}
(2\alpha^2 r^2 z^2 -\partial_z^2-\delta \alpha r \sqrt 2)h=\sqrt 2(\alpha r\sqrt2 z -\delta  \partial_z ) (\sigma -i\gamma rz)f.
\end{equation}
The eigenvalues of the operator $2\alpha^2 r^2 z^2 -\partial_z^2-\delta \alpha r \sqrt 2$ are $\alpha r\sqrt 2(2n+1-\delta)$, $n\in\N$, with the Hilbertian basis of eigenvectors $(h_{n,\sqrt 2 \alpha r})_{n\in\N}$, so that  $h$ and $f$ satisfy 
\[
r\alpha\sqrt 2(2n+1-\delta) (h,h_{n,\alpha r\sqrt 2})_{L^2}=\sqrt 2
((\sigma-i\gamma rz)f, (\alpha r\sqrt 2 z-\delta\partial_z)h_{n,\alpha r\sqrt 2})_{L^2}. 
\]
In particular, when $\delta=1$, one has the solution $f=0$, $h=h_{0,\alpha r\sqrt 2}$.
We denote by $h=T(f)$ the function that satisfies 
\[
\mathfrak a T(f)+(\sigma-i\gamma r z)f=0,
\]
and which is a solution to~\eqref{eq:a*}, we choose $h=T(f)$.
Note that we  
have obtained an eigenspace of infinite multiplicity. 
\smallskip 

Let us now study the family of eigenvectors. All what we do below depends on $\sigma$. \\
(i) When $\delta=-1$ and $\alpha=\frac 1{\sqrt2}$, 
equation~\eqref{eq:a*} reduces to $(rz-\partial_z)h=-\sqrt 2\sigma f$.
Therefore, we have the set of eigenvectors 
\begin{align*}
\begin{pmatrix} 
h_{0,r}\\ 0\\0
\end{pmatrix},\;\;
\begin{pmatrix} 
0\\ h_{0,r}\\0
\end{pmatrix},\;\;
\begin{pmatrix} 
\frac 12 (1+\lambda^{-1}_{n,\pm} \sigma )h_{n,r}\\ 
\frac 12 (1-\lambda^{-1}_{n,\pm}\sigma )h_{n,r}\\
(1/\sqrt 2)\, \lambda^{-1}_{n,\pm} \tau_{n-1} h_{n-1,r}
\end{pmatrix},\;\; 
n\in\N\setminus\{0\},\;\;
\begin{pmatrix}
h_{p,r}\\ -h_{p,r}\\ -\frac{2\sigma}{\tau_p}h_{p+1,r}
\end{pmatrix}, \;\; p\in \N%\;\mbox{with} \; \mathfrak ah+(\sigma-i\gamma rz) f=0
\end{align*}
where we have set $\sqrt 2 \mathfrak a^*h_{n+1,r}= (rz+\partial_z) h_{n+1,r}=\tau_{n} h_{n,r}$. Using $\lambda_{n,+}=-\lambda_{n,-}$, we  can consider 
equivalently the family 
\begin{align*}
\begin{pmatrix} 
h_{0,r}\\ 0\\0
\end{pmatrix},\;\;
\begin{pmatrix} 
0\\ h_{0,r}\\0
\end{pmatrix},\;\;
\begin{pmatrix} 
h_{n,r}\\ 
h_{n,r}\\
0
\end{pmatrix},\;\; 
\begin{pmatrix} 
 \sigma h_{n,r}\\ 
-\sigma h_{n,r}\\
\sqrt 2\, \tau_{n-1} h_{n-1,r}
\end{pmatrix},\;\; 
n\in\N\setminus\{0\},\;\;
\begin{pmatrix}
h_{p,r}\\ -h_{p,r}\\ -\frac{2\sigma}{\tau_p}h_{p+1,r}
\end{pmatrix}, \;\; p\in \N%\;\mbox{with} \; \mathfrak ah+(\sigma-i\gamma rz) f=0
\end{align*}
which generates a Hermitian basis (after normalization). \\
(ii) When $\delta=-1$ and $\alpha\not=\frac 1{\sqrt2}$, linear combination of the eigenvectors, as those used in the previous case, suggests to look for a family of the form  
\begin{align}
\label{def:basisalpha}
U_n:=\begin{pmatrix} 
h_{n,r}\\ 
h_{n,r}\\
0
\end{pmatrix},\;\; 
V_n:=\begin{pmatrix} 
(\sigma +i\gamma z) h_{n,r}\\ 
-(\sigma+i\gamma z) h_{n,r}\\
 2\,  \mathfrak a^* h_{n,r}
\end{pmatrix},\;\; 
n\in\N,\;\;
W_\phi:=\begin{pmatrix}
\phi\\ -\phi\\ T(\phi)
\end{pmatrix}
\end{align}
for an adequate choice of vectors $\phi$ that we are going to perform now. The first point to notice is the orthogonality of these families. Indeed, by the definition of $T(\phi)$ we have 
\[
\left( V_n,W_\phi\right)_{L^2(\R,\C^3)}= 
2 (h_{n,r},(\sigma-i\gamma z)\phi)_{L^2(\R)} + 2(h_{n,r}, \mathfrak a T(\phi))_{L^2(\R)}=0.
\]
We define a Hermitian inner product $\Lambda$  on   $L^2(\R)$ by setting  
\[
\Lambda(u,v)= (T(u),T(v))_{L^2(\R)} + 2( u,v)_{L^2(\R)},\;\; u,v\in L^2(\R),
\]
and we construct $(\phi_n)_{n\in \N}$ by requiring 
\begin{align*}
&{\rm Span}\left(h_{k,r},\;\; 0\leq k\leq n\right)={\rm Span}\left(\phi_k,\;\; 0\leq k\leq n\right),\;\; n\in\N,\\
&\phi_{n+1}\in {\rm Span}\left(h_{k,r},\;\; 0\leq k\leq n\right)^\perp ,\;\; n\in\N,\\
&\Lambda(\phi_k,\phi_n)_{L^2(\R,\C^3)}=0,\;\; \forall k\not=n. 
\end{align*}
The third condition can be obtained via a Gram-Schmidt orthogonalization process  for the Hermitian inner product $\Lambda$.
With this choice, the family $\{U_n, V_n, W_{\phi_n},\,n\in\N\}$ is an orthogonal family of $L^2(\R,\C^3)$ and $(\phi_n)_{n\in\N}$  is a complete orthogonal family of $L^2(\R)$. To conclude the proof, it remains to show that if $\,^t(u,-u,w)$ is orthogonal to all the $V_n$-s and $W_{\phi_p}$-s, $n,p\in\N$, then, necessarily $u=w=0$. Such a pair $(u,w)$ verifies for all $n\in\N$,
\[ 
(u,(\sigma+i\gamma z) h_{n,r})_{L^2(\R)} + (w,\mathfrak a^* h_{n,r})_{L^2(\R)}=0 \;\;\mbox{and}\;\;
2(u,\phi_n)_{L^2(\R)} +(w, T(\phi_n))_{L^2(\R^n)}=0.
\]
We deduce from the first equation that $w=T(u)$, and the second gives  $\Lambda(u,\phi_n)=0$ for all $n\in\N$, whence  $u=0$.
We conclude that the family $\{U_n, V_n, W_{\phi_n},\,n\in\N\}$ is a Hermitian basis of $L^2(\R,\C^3)$, after normalisation. \\
(iii) When $\delta=1$, a similar analysis (that we let to the readers) can be performed. 
\end{proof}

%%%%%%%%%%%%%%%%%%%%%%%%%%%%%%%%%%%%
%%%%%%%%%%%%%%%%%%%%%%%%%%%%%%%%%%%%%%%%%%

\section{Proof of the main result: existence of edge states}\label{sec:proof}

In this section, we prove Theorem~\ref{theo:main}. We first explain why the existence of Schwartz functions in the kernel of the operator $\widehat P_t$ defined in~\eqref{def:Pt} is enough for the construction of edge-states. Then, using the reduction result of Theorem~\ref{theo:model_pb} and the spectral resolution of the model problems stated in Proposition~\ref{prop:spectral}, we are able to decide when some edge states exist. Finally, in the last subsection, we characterize each case by properties of the matrices $\Theta_0$, $\Theta_1$ and $\Theta_2$ in~\eqref{def:H}.  

\subsection{Existence of edge states}\label{sec:Edge}

The discussion of Section~\ref{subsec:strategy} has led us to the equation~\eqref{eq:edge_state}: the profile $\varphi^\hbar$ must satisfy
\begin{align*}
0= &\left(\dot x_t \cdot D_y+ y\cdot \nabla m(x_t)\Theta_0 +D_{y_1}\Theta_1 + D_{y_2}\Theta_2\right)\varphi^h\\
&+\sqrt h \left( -i\partial_ t  +\frac 12 {\rm Hess}\,  m(x_t) y\cdot y \,\Theta_0\right)\varphi^h\\
& + \left(\sum_{2\leq j\leq N} h^\frac j2 \frac 1 {j!} d^j m(x_t) [y]^j\right)\Theta_0\varphi^h + h^{\frac {N+1}2}\mathcal O\left(\||y|^{N+1}\varphi^h(t,\cdot)\|_{L^2(\R^2,\C^3)}\right).
\end{align*}
 With arguments similar to those of~\cite{BBDFLW}, solving these equations allows us to construct an edge-mode. The next result gives a condition for this program. 

\begin{lemma}\label{lem:cond_suff}
Assume that for all $t\in\R$, $\ker \widehat P_t$ is not trivial and there exists a Schwartz family of eigenfunctions of $\widehat P_t$ that generate $L^2(\R^2,\C^3)$. There exists a sequence  $(\varphi_\ell(t,\cdot))_{\ell\in\N}$ of time-dependent Schwartz functions such that for all $N\in\N$, $N\geq 2$, if \begin{equation}\label{eq:psiapp}
 \psi ^ {h,N}_{\rm app}(t,x)=\frac 1{\sqrt h}\sum_{\ell=0} ^N h^{\frac \ell 2} \varphi_\ell \left(t,\frac {x-x_t}{\sqrt h}\right),\;\; (t,x)\in \R\times \R^2,
\end{equation}
then for all $T>0$, there exists $C_{N,T}$ such that
the solution $\psi^h(t,\cdot)$ of~\eqref{eq:dirac}, with the initial data $\psi^h_0=\psi_{\rm app}^{h,N}(0,\cdot)$ satisfies 
\begin{align}
\label{eq:approx}
&\sup_{t\in[0,T]}
\| \psi ^ h(t,\cdot)-\psi ^ {h,N}_{\rm app}(t,\cdot)\|_{L^2(\R^2,\C^3)}  \leq C_{N,T}\,h^{\frac{N-1}2}.
\end{align}
\end{lemma}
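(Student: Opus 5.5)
We follow the scheme of~\cite{BBDFLW}. Write $\varphi^h=\sum_{\ell\le N}h^{\ell/2}\varphi_\ell$ and plug it into~\eqref{eq:edge_state}. Collecting powers of $\sqrt h$ gives a triangular system:
\begin{align*}
&\widehat P_t\varphi_0=0,\\
&\widehat P_t\varphi_1=\Bigl(i\partial_t-\tfrac12{\rm Hess}\,m(x_t)y\cdot y\,\Theta_0\Bigr)\varphi_0,\\
&\widehat P_t\varphi_\ell=\Bigl(i\partial_t-\tfrac12{\rm Hess}\,m(x_t)y\cdot y\,\Theta_0\Bigr)\varphi_{\ell-1}-\sum_{3\le j\le \ell+1}\tfrac1{j!}d^jm(x_t)[y]^j\,\Theta_0\varphi_{\ell+1-j},\qquad \ell\ge 2.
\end{align*}
Each equation will be solved by first projecting onto $\ker\widehat P_t$ and then onto its orthogonal complement.

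\textbf{Solvability and transport equations.} Let $\Pi_t$ denote the orthogonal projector onto $\ker\widehat P_t$. The equation $\widehat P_t\varphi_\ell=R_\ell$ can be solved in the Schwartz class if and only if $\Pi_tR_\ell=0$. In that case we set $\varphi_\ell^\perp=\widehat P_t^{-1}(1-\Pi_t)R_\ell$ and add a free component $\Pi_t\varphi_\ell$.

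To invert $\widehat P_t$ on $(\ker\widehat P_t)^\perp$, we use the assumed Schwartz family of eigenfunctions. After the rotation~\eqref{def:coordzs} and the Fourier transform in $s$, these are the eigenvectors given by Proposition~\ref{prop:spectral}, transported back through~\eqref{eq:reductio}. The eigenvalues of the fibered operator $\rho_t\sigma+\Gamma_t(\sigma,z,D_z)$ that do not vanish identically stay away from $0$ in a way that is quantitative in $\sigma$. Since $\Pi_tR_\ell=0$, the inverse therefore maps Schwartz functions to Schwartz functions, with seminorm bounds that are locally uniform in $t$. Here we use $r_t\ge c_0$ from~\eqref{eq:min_m} and the smooth dependence of $\theta_t,r_t$ on $t$.

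The condition $\Pi_t R_{\ell+1}=0$ determines $\Pi_t\varphi_\ell$. It reads
\[
\Pi_t\Bigl(i\partial_t-\tfrac12{\rm Hess}\,m(x_t)y\cdot y\,\Theta_0\Bigr)\Pi_t\varphi_\ell=\text{(terms already known)},
\]
which is a linear evolution equation posed on $\ker\widehat P_t$. By Proposition~\ref{prop:spectral}, the kernel elements can be taken independent of $\sigma$. The kernel is therefore parametrized by a function $f(t,s)$ times a fixed profile in $z$, and the projected equation is a one-dimensional Schr\"odinger-type equation in $s$ with polynomial coefficients and a time-dependent frame. We solve it by standard energy and commutator estimates in weighted Sobolev spaces, which preserves the Schwartz class; $\Pi_t\varphi_0(0)$ is the prescribed datum. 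Induction on $\ell$ then produces all the $\varphi_\ell$.

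\textbf{Error estimate.} Define $\psi^{h,N}_{\rm app}$ by~\eqref{eq:psiapp}. By construction, the equations above hold up to order $h^{N/2}$ for $\ell\le N$ in the Taylor expansion of~\eqref{eq:edge_state}. Combined with the remainder $h^{(N+1)/2}\mathcal O(\||y|^{N+1}\varphi\|)$, this gives
\[
ih\partial_t\psi^{h,N}_{\rm app}-\widehat H\psi^{h,N}_{\rm app}=h\, r^h(t),\qquad \sup_{[0,T]}\|r^h(t)\|_{L^2}\le Ch^{\frac{N-1}{2}}.
\]
We have to check that the overall factor $h$ comes out correctly: the expansion is of $h^{-1/2}(ih\partial_t-\widehat H)$ applied to the rescaled function, and this factor shifts the exponent. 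Then $\widehat H$ is self-adjoint and the propagator is unitary, so Duhamel's formula gives
\[
\|\psi^h(t)-\psi^{h,N}_{\rm app}(t)\|\le h^{-1}\int_0^t\|h\,r^h\|\le C_{N,T}h^{\frac{N-1}2}.
\]

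\textbf{Main obstacle.} The main difficulty is the uniformity of $\widehat P_t^{-1}$ on the complement of the kernel, in Schwartz seminorms. The spectrum of the fibered operator contains branches that are only bounded below by $\sqrt{r}$ times a constant. In case (iii), the $0$ eigenvalue has infinite multiplicity, and in the $\Gamma_{\alpha,\gamma}$ case the basis is not explicit; the branches of $\rho_t\sigma+\lambda(\sigma)$ may also approach $0$ as $|\sigma|\to\infty$. This is where the hypothesis that a Schwartz eigenbasis exists is used. I would first try to establish a gap estimate of the form $|\rho_t\sigma+\lambda_{n,\pm}(\sigma)|\ge c(1+|\sigma|)^{-1}$ from the explicit formulas, and absorb the polynomial loss into the Schwartz seminorms.
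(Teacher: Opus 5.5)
Your architecture is the paper's: the same triangular hierarchy, solvability by orthogonality to $\ker\widehat P_t$, and a Duhamel estimate at the end. Your indexing of the Taylor terms, $\sum_{3\le j\le \ell+1}$, is in fact the correct one.

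Your treatment of the kernel component is more careful than the paper's. The kernel $\ker\widehat P_t$ consists of all functions $f(s)\Phi_t(z)$ for a fixed profile $\Phi_t$, so it is infinite-dimensional. The paper's scalar phase $\mathrm{e}^{-i\Lambda(t)}$ and scalar corrections $\lambda_{\ell-1}(t)\varphi_0$ only remove the component of the source along $\varphi_0$. You instead solve an evolution equation for the whole function $f(t,s)$, which is what is actually needed.

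\textbf{The gap: inverting $\widehat P_t$ on $(\ker\widehat P_t)^\perp$.} You claim that the branches of $\rho_t\sigma+\Gamma_t(\sigma,z,D_z)$ that do not vanish identically stay quantitatively away from $0$. This is false, and with it the criterion ``solvable in $\mathcal S$ iff $\Pi_tR_\ell=0$''. There are branches that vanish at $\sigma=0$ only:
\begin{itemize}
\item In case (iii) with $\rho_t=-1$, the infinitely degenerate eigenvalue $0$ of $\Gamma_{\frac1{\sqrt2},0}(\sigma,r_tz,-D_z)$ becomes the branch $-\sigma$, and the second edge branch becomes $-2\sigma$.
\item In the PJT case with $\rho_t=-\delta$, the cubic $p_n$ has the root $0$ at $\sigma=0$. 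Hence $-\delta\sigma+\mu_{n,2}(\sigma,r_t)$ vanishes at $\sigma=0$ for every $n\ge 1$.
\end{itemize}
These branches contribute nothing to the $L^2$ kernel, since $\{\sigma=0\}$ is a null set, so $\Pi_t$ does not see them. But the inverse behaves like $1/\sigma$ near $\sigma=0$. On such a branch the equation is essentially $\kappa D_s u=g$, which has an $L^2$ solution, let alone a Schwartz one, only if $\int_\R g\,ds=0$.

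Your proposed bound $|\rho_t\sigma+\lambda|\ge c(1+|\sigma|)^{-1}$ only handles the degeneration as $|\sigma|\to\infty$ and cannot hold for these branches. The existence of a Schwartz eigenbasis does not help either.

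\textbf{What is needed.} At each order you need extra compatibility conditions. The integral $\int_\R R_\ell(t,s,\cdot)\,ds$ must be orthogonal to the kernel of the transverse operator $\Gamma_\bullet(0,r_tz,\delta D_z)$ beyond the edge mode; that kernel is infinite-dimensional in the PJT and $(\frac1{\sqrt2},0)$ cases. You also need enough uniformity over the branches to get smooth quotients at $\sigma=0$. The free datum $f_\ell(t,s)$ cannot enforce these conditions in general. You have two options:
\begin{itemize}
\item Show the conditions hold because of the structure of the source terms. In the JT case this is trivial: the constant kernel vector is annihilated by every $\Theta_j$, so the flat branch $-\delta\sigma$ is never excited. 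In the PJT and $(\frac1{\sqrt2},0)$ cases it needs an argument at every order.
\item Relax the requirement that the correctors be Schwartz in $s$, and control the resulting error separately.
\end{itemize}
The paper does not supply this step either. It simply asserts that the correctors can be found in the Schwartz class once the right-hand side is orthogonal to $\ker\widehat P_t$, so you cannot import the missing argument from it.
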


Note that the estimate~\eqref{eq:approx} says no more that for all $N\geq 2$, 
\[
\sup_{t\in[0,T]}
\| \psi ^ h(t,\cdot)-\psi ^ {h,N-2}_{\rm app}(t,\cdot)\|_{L^2(\R^2,\C^3)}  \leq C_{N,T}\,h^{\frac{N-1}2}.
\]
Indeed, replacing $\psi_{\rm app}^{h,N-2}$ by $\psi_{\rm app}^{h,N}$ introduces correction terms which are of size $O(h^{\frac{N-1}2})$ in $L^2(\R^2,\C^3)$.
Alternatively, we could write that for all $N\in\N$
\[
\sup_{t\in[0,T]}
\| \psi ^ h(t,\cdot)-\psi ^ {h,N}_{\rm app}(t,\cdot)\|_{L^2(\R^2,\C^3)}  \leq C_{N,T}\,h^{\frac{N+1}2},
\]
However, our  proof leads to~\eqref{eq:approx}, which explains why we state the result in this manner.

\begin{proof}
Looking for  $\varphi^h=\sum_{0\leq \ell\leq N} h^{\frac \ell 2}\varphi_\ell$, we deduce that  the function $\varphi_0$ must be taken in the kernel of the operator~$\widehat P_t$ defined in~\eqref{def:Pt}. Then $\varphi_1$ has to satisfy 
\begin{equation}\label{eq:phi1}
\widehat P_t \varphi_1= - \left(-i\partial_ t  +\frac 12 {\rm Hess}\,  m(x_t) y\cdot y\,\Theta_0\right)\varphi_0,
\end{equation}
and the $\varphi_\ell$'s function, %$2\leq \ell\leq N$
\begin{equation}\label{eq:phi_ell}
\widehat P_t \varphi_\ell= -\left(-i\partial_ t  +\frac 12 {\rm Hess}\,  m(x_t) y\cdot y\, \Theta_0\right)\varphi_{\ell-1}
-\sum_{2\leq j\leq \ell}  \frac 1 {j!} d^j m(x_t) [y]^j\,\Theta_0\varphi_{\ell-j},\;\;2\leq\ell\leq N.
\end{equation}
The resolution of the equation for $\varphi_1$ requires that for this choice of $\varphi_0\in\ker \widehat P_t$, we have
\[
\left(-i\partial_ t  +\frac 12 {\rm Hess}\,  m(x_t) y\cdot y\,\Theta_0\right)\varphi_0\in \ker \widehat P_t^\perp.
\]
As for the equations for $\varphi_\ell$, $2\leq \ell\leq N$, we just need to be able to associate with any $g_\ell\in\mathcal S(\R^2,\C^3)$ a function $f_\ell\in \ker \widehat{P}_t $ such that 
\[
g_\ell +\left(-i\partial_ t  +\frac 12 {\rm Hess}\,  m(x_t) y\cdot y\,\Theta_0\right)f_\ell \in \ker \widehat P_t^\perp.
\]
Then, turning $\varphi_{\ell-1}$ into $\varphi_{\ell-1}-f_\ell$, $f_\ell$ associated with $g_\ell= \sum_{2\leq j\leq \ell}  \frac 1 {j!} d^j m(x_t) [y]^j\,\Theta_0\varphi_{\ell-j}$, the right-hand side of~\eqref{eq:phi_ell} belongs to $\ker\, \widehat P_t^\perp$, and we will find $\varphi_\ell$. This recursive process will produce a function $\psi^h_{\rm app}$ satisfying~\eqref{eq:psiapp} and the equation
\[
ih\partial_t \psi^h_{\rm app}-\widehat H \psi^h_{\rm app}=r_N,
\]
where, once $T>0$ is fixed, for all $t\in[0,T]$, 
\[
r_N(t,\cdot)= \mathcal{O}_T\left(h^\frac{N+1}{2}\right) \text{ in } L^2(\R^2,\C^3).
\]
Therefore, as in \cite[Lemma 3.5]{BBDFLW} for all $t\in\R$, 
\[
\frac{d}{dt}\left\|\psi^h(t,\cdot) -\psi^h_{\rm app} (t,\cdot)\right\|_{L^2(\R^2,\C^3)}\leq \frac{1}{h}\sup_{s\in[0,T]}\|r_N(s,\cdot)\|_{L^2(\R^2,\C^3)}=\mathcal{O}_T\left(h^{\frac{N-1}{2}}\right).
\]
Integrating between $0$ and $t$, we obtain 
\[
\left\|\psi^h(t,\cdot) -\psi^h_{\rm app} (t,\cdot)\right\|_{L^2(\R^2,\C^3)}
-
\left\|\psi^h_0 -\psi^h_{\rm app} (0,\cdot)\right\|_{L^2(\R^2,\C^3)}
=\mathcal{O}_T\left(h^{\frac{N-1}{2}}\right).
\]
The fact that for all $t\in\R$, $r_N(t,\cdot)$ is of size $\mathcal O(h^{\frac{N+1}2})$ in $L^2(\R^2,\C^3)$ results from the possibility of constructing the $\varphi_\ell$'s within the Schwartz class (which also requires that the kernel of $\widehat P_t$ does not reduce to $\{0\}$). 

\medskip 

We now focus on the properties of the functions $(\varphi_\ell)_{\ell\in\N}$. Let
$f_0(t,\cdot)$ be a normalized Schwartz function of $\ker\widehat P_t$. We deduce from $\| f_0(t,\cdot)\|_{L^2}=1$ the relation 
\[
{\rm Re}\,(\partial_t f_0(t,\cdot),f_0(t,\cdot))_{L^2(\R^2,\C^3)}=0.
\]
Therefore, the quantity 
\[
G(t):=\Bigl(-i\partial_ t f_0(t,\cdot) +\frac 12 {\rm Hess}\,  m(x_t) y\cdot y\,\Theta_0 f_0(t,\cdot)\;,\;f_0(t,\cdot)\Bigr)_{L^2(\R^2,\C^3)}\]
is real valued. We set  
\[
\varphi_0(t,y)={\rm e}^{-i\Lambda(t)}f_0(t,y),\;\;
\Lambda(t)=\int_0^t G(s) ds,\;\; y\in\R^2.
\]
Then, for all $t\in\R$, $\varphi_0(t,\cdot)\in \ker\,\widehat P_t$,  $\|\varphi_0(t,\cdot)\|_{L^2(\R^2,\C^3)}=1$ and we have 
\begin{align*}
&-i\partial_t \varphi_0 (t,\cdot) +\frac 12 {\rm Hess}\, m(x_t) y\cdot y\,\Theta_0 \varphi_0(t,\cdot)\\
&\;\;\;\;\; ={\rm e}^{-i\Lambda(t)} \Bigl( -G(t) f_0(t,\cdot) -i\partial_t f_0(t,\cdot) +\frac 12 {\rm Hess}\, m(x_t) y\cdot y\,\Theta_0 f_0(t,\cdot)\Bigr).
\end{align*}
Therefore, 
\begin{equation}\label{eq:ortho}
-i\partial_t \varphi_0 (t,\cdot) +\frac 12 {\rm Hess}\, m(x_t) y\cdot y\,\Theta_0 \varphi_0(t,\cdot) \in \ker \widehat P_t^\perp.
\end{equation}

At that step of the proof, we have constructed  $\varphi_0(t,\cdot)$. This function is Schwartz class (as $f_0(t,\cdot)$ is supposed to be), depends smoothly on the time variable, and it is possible to find  $u_1$ satisfying~\eqref{eq:phi1}, leading to  $\varphi_1=u_1+\lambda_1 \varphi_0$, $\lambda_1\in\C$. 
The coefficient $\lambda_1$ will be fixed.
Moreover, by the properties of the eigenfunctions of the operator $\widehat P_t$, the function $\varphi_1(t,\cdot)$ is Schwartz class. 
\smallskip 

The overall construction is a recursive process: suppose constructed 
\[ u_0=0, \;\lambda_0=1,\; u_1, \; \lambda_1,\cdots, u_{\ell-2}, \;\lambda_{\ell-2}, \; u_{\ell-1} \]
such that 
$\varphi_j=u_j+\lambda_j \varphi_0$ satisfies the equations for the indices $0\leq j\leq \ell-2$, and such that~$u_{\ell-1}$ is a solution of~\eqref{eq:phi_ell} for the index $\ell-1$. Let us construct $\lambda_{\ell-1} $ and $u_\ell$.
We
denote by $T[v]$ an operator describing the right-hand side of~\eqref{eq:phi_ell}:
for $v\in C^\infty(\R,\mathcal S(\R^2,\R^3))$, $(t,y)\in\R\times \R^2$, we set 
\[
T[v](t,y)= (-i\partial_t +\frac 12 {\rm Hess}\,  m(x_t) y\cdot y\,\Theta_0)v(t,y) + \sum_{2\leq j\leq \ell}  \frac 1 {j!} d^j m(x_t) [y]^j\,\Theta_0\varphi_{\ell-j}(t,y).
\]
For $y\in\R^2$ and $t\in\R$,
we choose
\begin{align*}
\lambda_{\ell-1}(t)&=-i\int_0^t\bigl(T[u_{\ell-1}]
(s,\cdot)
\;,\; \varphi_0(s,\cdot)\bigr)_{L^2(\R^2,\C^3)}ds,\\
\varphi_{\ell-1}(t,y)&=u_{\ell-1}(t,y)+\lambda_{\ell-1}(t) \varphi_0(t,y).
\end{align*}
Then, because of~\eqref{eq:ortho}, we have, modulo being in $\ker\,\widehat P_t^\perp$
\[
T[\varphi_{\ell-1}]
(t,\cdot)
= T[u_{\ell-1}](t,\cdot) -i\partial_t\lambda_{\ell-1}(t) \varphi_0(t,\cdot )  \;\;{\rm mod}\;\; \ker\,\widehat P_t^\perp,
\]
 whence 
$T[\varphi_{\ell-1}]\in \ker\,\widehat P_t^\perp$.
In other words, the right-hand side of equation~\eqref{eq:phi_ell} for the index $\ell$ is orthogonal to $\ker \widehat P_t$ and one can find $u_\ell$ (Schwartz class and depending smoothly on $t$) solving this equation. The construction of $\lambda_{\ell-1} $ and $u_\ell$ concludes the recursive process. This closes the proof.
\end{proof}

\subsection{Reduction to a model problem}
In this section, we look for properties of $\widehat P_t$.  
We follow the strategy of Section~\ref{subsec:strategy} and transform the operator $\widehat P_t$. First, we perform the linear change of variables $\chi_t : y\mapsto (z,s)$ described in~\eqref{def:coordzs}, and we have 
\[
\chi_t^* \widehat P_t(\chi_t)_*= \rho_t D_s +r_t z \, \Theta_0 +D_s \Theta_1(t) + D_z \Theta_2(t)
\]
where the matrices $\Theta_1(t)$ and $\Theta_2(t)$ are given by~\eqref{eq:Theta}.
Next, as explained in Section~\ref{subsec:strategy}, we  apply a  partial Fourier transform $\mathcal F_s$ on the variable $s$. Denoting by $\sigma$ this Fourier variable, we are left with the operator  
\[
\rho_t\sigma{\rm Id} +\Gamma_t(\sigma, r_tz,D_z)
\]
introduced in~\eqref{eq:gamma_hat}.
\smallskip

By Theorem~\ref{theo:model_pb}, the matrix-valued symbol $ \Gamma_t(\sigma, z,\zeta)$ is 
 unitarily equivalent via a constant unitary matrix $U_t$ to one of the matrices  $\Gamma_\bullet (\sigma,\kappa_t^{-1}(z,\delta \zeta))$, where~$\kappa_t$ is a linear symplectomorphism of $\R^2=T^*\R$, $\delta=\pm 1$, $\bullet$ stands for the index JT, PJT or $\alpha,\gamma$ with $\alpha \geq 0$ and $\gamma^2+2\alpha^2=1$:
 \begin{equation}
\label{eq:identification0}
 U_t\Gamma_t(\sigma,z,\zeta) U_t^* = \Gamma_\bullet(\sigma, u_t(z,\zeta),\delta v_t(z,\zeta)),\;\;\kappa_t^{-1}(z,\zeta)=(u_t(z,\zeta), v_t(z,\zeta)).
 \end{equation}
Since the Poisson bracket of the linear functions $v_t(r_tz,\zeta)$  and $u_t(r_tz,\zeta)$ is $r_t$, we deduce that the linear map $(z,\zeta)\mapsto \tilde\kappa_t (z,\zeta):=\bigl(r_t^{-1} u_t(r_tz,\zeta),v_t(r_tz,\zeta)\bigr)$
is a symplectomorphism and we have \begin{equation}
\label{eq:identification1}
 U_t\Gamma_t(\sigma,r_tz,\zeta) U_t^* = \Gamma_\bullet(\sigma,  u_t(r_tz,\zeta), \delta v_t(r_tz,\zeta))=
 \Gamma_\bullet(\sigma, r_t\tilde u_t(z,\zeta),\delta \tilde v_t(z,\zeta)),
   \end{equation}
   with $\tilde \kappa_t^{-1}(z,\zeta)=(\tilde u_t(z,\zeta), \tilde v_t(z,\zeta))$.
 Considering the Fourier integral operator $\Lambda_t^h$ associated with the symplectomorphism $\tilde \kappa_t$ (see Appendix~\ref{app:OIF}), we obtain the identity
\begin{align*}
 \Lambda_t^h U_t\Gamma_t(\sigma,r_t z,D_z) U_t^*(\Lambda_t^h)^*&=
 U_t\Gamma_t(\sigma,\tilde \kappa_t(z,D_z)) U_t^*\\
 &=\Gamma_\bullet (\sigma, r_t \tilde u_t(\tilde\kappa_t(z,D_z),\delta \tilde v_t(\tilde\kappa_t(z,D_z))
 \\ &=\Gamma_\bullet (\sigma, r_tz,\delta D_z).
\end{align*}
The conjugation with the Fourier transform in $s$ (which commutes with the other operators) gives 
\begin{equation}\label{eq:identification}
\Lambda_t^h U_t\chi_t^* \,\widehat P_t\, (\chi_t)_*
U_t^* (\Lambda_t^h)^* =\rho_t\,D_s+ 
 \Gamma_\bullet (D_s, r_tz,\delta D_z).
\end{equation}
Of course, $\bullet$ is JT, if $\,^t(0,0,1)$ is in the kernel of the symbol of $\Lambda_t^h U_t\chi_t^* \, \widehat P_t\,(\chi_t)_* 
U_t^* (\Lambda_t^h)^*$. 
\smallskip 

The Proposition~\ref{prop:spectral} then allows us to conclude on the choice to perform for $\rho_t$ in order to have non zero vectors in the kernel of the operator $\widetilde P_t$. We have the following alternative 
\begin{enumerate}
\item [(i)] If $\bullet$ is JT, then $0$ and $\delta$ are the only possible choices for $\rho_t$.
\item [(ii)] If $\bullet$ is PJT, then $\rho_t=\delta$ is the only possible choice. 
\item[(iii)] if $\bullet$ is $(\alpha,\gamma)$ with $\alpha\not=\frac{1}{\sqrt{2}} $ or $\delta\not= -1$, then $\rho_t=0$. If not, i.e. if $\alpha=\frac{1}{\sqrt{2}}$ and $\delta=-1$, then there is three choices for $\rho_t$: $0$, $1$ and $-1$.
\end{enumerate}
The choice of $\rho_t=0$ leads to the construction of an approximate solution to~\eqref{eq:dirac} which remains microlocalised on a single point $(x_0,0)\in\mathcal E$. Removing this possibility leads to the description of Theorem~\ref{theo:main}. It just remains to connect the model problem to properties of the matrices $\Theta_0$, $\Theta_1$ and $\Theta_2$.

\subsection{Identification of the model problem}

%%%%%%%%%%%%%%%%%%%%%
%%%%%%%%%%%%%%%%%%%%%

In order to achieve the proof of Theorem~\ref{theo:main}, it remains to verify that the model problem to which one reduces the operator $ \rho_t\sigma + \Gamma_t(\sigma,z,D_z)$ of~\eqref{eq:gamma_hat}
is determined by the relations we can observe  between the subspaces of~$\C^{3,3}$
\begin{align*}
\mathcal W:={\rm Span} _\C\left(\Theta_i,\;0\leq i\leq 2\right)\;\;{\rm and}\;\;\mathcal V:={\rm Span}_\C \left([\Theta_i,\Theta_j],\;\Theta_i,\;0\leq i,j\leq 2\right)
.
\end{align*}
Note that $\mathcal V$ and $\mathcal W$ are subspaces of the set of $3$ by $3$ trace-free matrices, with $\mathcal W\subset \mathcal V $ and by hypothesis $\dim_\C \mathcal W=3$.
\smallskip 

Setting $\Theta_0(t)=\Theta_0$ for all $t\in \R$,  by definition of the matrices $\Theta_j(t)$, $0\leq j\leq 2$ (see~\eqref{eq:Theta}),
we have 
\[
\mathcal W_t:={\rm Span} \left(\Theta_i(t),\;0\leq i\leq 2\right)=\mathcal W,\;\;\dim \mathcal W_t=3.
\]
Moreover, because of  the relations
\begin{align*}
&[\Theta_0, \Theta_1(t)]=-\cos\,\theta_t [\Theta_0,\Theta_1]-\sin\,\theta_t [\Theta_0,\Theta_2],\\
&
[\Theta_0,\Theta_2(t)]=-\sin\,\theta_t [\Theta_0,\Theta_1]+\cos\,\theta_t[\Theta_0,\Theta_2],\\
&[\Theta_1(t),\Theta_2(t)]=- [\Theta_1,\Theta_2],\;\;\forall t\in\R,
\end{align*}
we have 
\[
\mathcal V_t:=
{\rm Span} \left([\Theta_i(t),\Theta_j(t)],\;\Theta_i(t),\;0\leq i,j\leq 2\right)=\mathcal V,\;\;\forall t\in\R.
\]
As a consequence, the conditions about the sets $\mathcal V$ and $\mathcal W$ of Theorem~\ref{theo:main} can be replaced by the same conditions on $\mathcal V_t$ and $\mathcal W_t$. 
\smallskip 

 We start by analyzing these subspaces for the three model Hamiltonians. We denote them by~$\mathcal V_\bullet$ and $\mathcal W_\bullet$, indexing them in the same manner as the Hamiltonians.
We write, using~$\bullet$ for one of the indices JT, PJT or $\alpha,\gamma$,
\[ \Gamma_\bullet(\sigma,z,\zeta)=\sigma\Gamma_{0,\bullet}+z
\Gamma_{1,\bullet}+\zeta \Gamma_{2,\bullet}.
\]
The description reads as follows:

\noindent  1- {\it Jahn-Teller Hamilonian}. For $\Gamma_{\rm JT}(\sigma,z,\zeta)$, we have $\mathcal V_{\rm JT}=\mathcal W_{\rm JT}$ with the relations 
\[
[\Gamma_{0,{\rm JT}},\Gamma_{1,{\rm JT}}]=2i \Gamma_{2,{\rm JT}},
\;\;
[\Gamma_{0,{\rm JT}},\Gamma_{2,{\rm JT}}]=-2i \Gamma_{1,{\rm JT}},
\;\;
[\Gamma_{1,{\rm JT}},\Gamma_{2,{\rm JT}}]=2i \Gamma_{0,{\rm JT}}.
\]

\noindent 2- {\it Pseudo Jahn-Teller Hamiltonian}. For $\Gamma_{\rm PJT}(\sigma,z,\zeta)$, we have $\mathcal W_{\rm PJT}=\mathcal V_{\rm PJT}$ with
\[
[\Gamma_{0,{\rm PJT}},\Gamma_{1,{\rm PJT}}]=-i \Gamma_{2,{\rm PJT}}=,
\;\;
[\Gamma_{0,{\rm PJT}},\Gamma_{2,{\rm PJT}}]
={-i\Gamma_{1,{\rm PJT}}},
\;\;
[\Gamma_{1,{\rm PJT}},\Gamma_{2,{\rm PJT}}]
={i\Gamma_{0,{\rm PJT}}}.
\]

\noindent  3- {\it $\alpha,\gamma$-Hamiltonians.} For $\Gamma_{\alpha,\gamma}(\sigma,z,\zeta)$, 
we have $\mathcal W_{\alpha,\gamma}\subsetneq \mathcal V_{\alpha,\gamma}$ with $\dim \mathcal V_{\alpha,\gamma}=6$ for $\gamma\not=0$ and $\dim \mathcal V_{\frac 1{\sqrt 2},0}=5$.
Moreover, we have the  relations 
\begin{align*}
[\Gamma_{0,{{\alpha,\gamma}}},\Gamma_{1,{{\alpha,\gamma}}}]&=
\begin{pmatrix}
0 & 2i\gamma & \alpha\\
2i\gamma & 0 & -\alpha\\
-\alpha & \alpha & 0
\end{pmatrix},
\;\;
[\Gamma_{0,{\alpha,\gamma}},\Gamma_{2,{\alpha,\gamma}}]=\frac 1{\sqrt 2}\begin{pmatrix}
0 & 0 & i\\
0 & 0 & -i\\
i& -i & 0
\end{pmatrix},\\
&
[\Gamma_{1,{{\alpha,\gamma}}},\Gamma_{2,{{\alpha,\gamma}}}]=-\frac 1{\sqrt 2}\begin{pmatrix}
2i\alpha & 2i\alpha & \gamma\\
2i\alpha & 2i\alpha & -\gamma\\
-\gamma & \gamma & -4i\alpha
\end{pmatrix},
\end{align*}
Moreover, we will use that only the matrices that can be reduced to a Jahn-Teller Hamiltonian have a constant vector inside their kernel.
\smallskip 
We can now conclude the proof of Theorem~\ref{theo:main}.

\begin{proof}[End of the proof of Theorem~\ref{theo:main}]
We use equation~\eqref{eq:identification0}. We consider $a,b,c,d\in\R^4$ such that 
\[
\kappa_t^{-1}(z,\zeta)=(az+b\zeta, cz+d\zeta),\;\;\forall (z,\zeta)\in\R^2.\]
Since $\kappa_t$ is a symplectomorphism, we have $ad-bc=1$. 
Equation~\eqref{eq:identification0}  yields
\begin{align*}
&\Theta_0= U_t^*(a\Gamma_{1,\bullet}+\delta c\Gamma_{2,\bullet})U_t,\\
&\Theta_1(t)=U_t^* \Gamma_{0,\bullet}U_t,\\
&\Theta_2(t)= U_t^*(b\Gamma_{1,\bullet}+\delta d\Gamma_{2,\bullet})U_t.
\end{align*}
Therefore, we have $\mathcal V=\mathcal V_t= U_t \mathcal V_\bullet U_t^*$ and $\mathcal W=\mathcal W_t= U_t \mathcal W_\bullet U_t^*$, and 
 the following alternative holds:
\begin{enumerate}
\item[(i)] We have  $\mathcal V=\mathcal W$ and  $\bullet$ is JT if the original Hamiltonian has a smooth vector in its kernel, if not, $\bullet$ is PJT. 
\item[(ii)] We have $\mathcal W\subsetneq\mathcal V$ and $\bullet$ is $\alpha,\gamma$ for some $\alpha\in(0,1/\sqrt 2]$ and $\gamma^2=1-2\alpha^2$. Moreover, $\dim \mathcal V=6$ if $\gamma\not=0$ and $\dim \mathcal V=5$ if $\gamma=0$.
\end{enumerate}
\smallskip 

 It remains to determinate the sign of $\delta$. 
We start by observing that by~\eqref{eq:Theta} and the previous observation, the matrices $\Theta_j$ and $\Gamma_{j,\bullet}$, $0\leq j\leq 2$ are linked by the equality between matrices written by blocks
\begin{align*}
\begin{pmatrix} 
{\rm Id} & 0_{\C^{2,2}} &0_{\C^{2,2}}\\
0_{\C^{2,2}} & -\cos \theta_t {\rm Id} & -\sin \theta_t {\rm Id}\\
0_{\C^{2,2}} & -\sin\theta_t {\rm Id} & \cos \theta_t {\rm Id} 
\end{pmatrix}
\begin{pmatrix}
\Theta_0 \\ \Theta_1\\ \Theta_2
\end{pmatrix}
= \begin{pmatrix} 
 0_{\C^{2,2}} & a{\rm Id} & \delta c{\rm Id}\\
{\rm Id} & 0_{\C^{2,2}}& 0_{\C^{2,2}}\\
0_{\C^{2,2}} &  b {\rm Id} & \delta d {\rm Id} 
\end{pmatrix}
\begin{pmatrix}
U_t^*\Gamma_{0,\bullet}U_t \\ U_t^*\Gamma_{1,\bullet} U_t\\ U_t^*\Gamma_{2,\bullet} U_t
\end{pmatrix}.
\end{align*}
We obtain 
\begin{align*}
\begin{pmatrix}
U_t\Gamma_{0,\bullet} U_t^*\\ U_t\Gamma_{1,\bullet}U_t^*\\ U_t\Gamma_{2,\bullet}U_t^*
\end{pmatrix}&=
\begin{pmatrix} 
 0_{\C^{2,2}} & {\rm Id} &  0_{\C^{2,2}}\\
d{\rm Id} & 0_{\C^{2,2}}& -c{\rm Id}\\
-\delta b {\rm Id} & 0_{\C^{2,2}} & a \delta  {\rm Id} 
\end{pmatrix}
\begin{pmatrix} 
{\rm Id} & 0_{\C^{2,2}} &0_{\C^{2,2}}\\
0_{\C^{2,2}} & -\cos \theta_t {\rm Id} & -\sin \theta_t {\rm Id}\\
0_{\C^{2,2}} & -\sin\theta_t {\rm Id} & \cos \theta_t {\rm Id} 
\end{pmatrix}
\begin{pmatrix}
\Theta_0 \\ \Theta_1\\ \Theta_2
\end{pmatrix}\\
&= 
\begin{pmatrix} 
0_{\C^{2,2}} & -\cos \theta_t {\rm Id} & -\sin \theta_t {\rm Id}\\
d{\rm Id}  & c \sin\theta_t {\rm Id} & -c \cos \theta_t {\rm Id} \\
-\delta b{\rm Id}  & -a\delta \sin\theta_t {\rm Id} & a\delta \cos \theta_t {\rm Id} 
\end{pmatrix}
\begin{pmatrix}
\Theta_0 \\ \Theta_1\\ \Theta_2
\end{pmatrix}
\end{align*}
A straightforward computation gives 
\begin{align*}
U_t[\Gamma_{0,\bullet}, \Gamma_{1,\bullet}]U_t^* & =
d\cos\theta_t [\Theta_0,\Theta_1] +d\sin\theta_t [\Theta_0,\Theta_2]+ c [\Theta_1,\Theta_2].
\end{align*}
Using $ad-bc=1$ and ${\rm Tr}(A[B,A])=0$, we deduce readily
\begin{align}
\label{trace}
{\rm Tr}\left([\Gamma_{0,\bullet}, \Gamma_{1,\bullet}]\Gamma_{2,\bullet}\right) & 
= 
\delta \,  {\rm Tr} \left(\Theta_0[ \Theta_1,\Theta_2]\right).
\end{align}
Observing 
\begin{align*}
&{\rm Tr}\left(\frac 1i\bigl[\Gamma_{0,{\rm JT}}, \Gamma_{1,{\rm JT}}\bigr]\,\Gamma_{2,{\rm JT}}\right)=
2{\rm Tr}\, ({\Gamma_{2,{\rm JT}}}^2)=4,\\
&{\rm Tr}\left(\frac 1i\bigl[\Gamma_{0,{\rm PJT}}, \Gamma_{1,{\rm PJT}}\bigr]\,\Gamma_{2,{\rm PJT}}\right)=
-{\rm Tr}\,  ({\Gamma_{2,{\rm PJT}}}^2)=-2,\\
&{\rm Tr}\left(\frac 1i\bigg[\Gamma_{\frac{1}{\sqrt 2},0}, \Gamma_{\frac 1{\sqrt 2},0}\bigg]\, \Gamma_{2,\frac 1{\sqrt 2},0}\right)=
-1.
\end{align*}
We deduce that
\begin{enumerate}
\item[(i)] if $\bullet={\rm JT}$, then $\delta={\rm sgn}\left(\frac 1i{\rm Tr} \left([ \Theta_1,\Theta_2]\Theta_0\right)\right)$,
\item [(ii)] if $\bullet={\rm PJT}$ or if $\bullet$ is the pair  $ (\frac 1{\sqrt 2}, 0)$,  then 
$\delta=-{\rm sgn}\left(\frac 1i{\rm Tr} \left([ \Theta_1,\Theta_2]\Theta_0\right)\right)$.
\end{enumerate}
Here, we have used that the matrix $\frac 1i [\Gamma_{0,\bullet},\Gamma_{1,\bullet}]$ is self-adjoint, which implies that the traces above are real-valued.
This concludes the proof of Theorem~\ref{theo:main} since $\rho_t=-\delta$ both in the case $\bullet =$JT and $\bullet =$PJT.
\end{proof}

%%%%%%%%%%%%%%%%%%%%%%%%%%%
%%%%%%%%%%%%%%%%%%%%

\appendix

\section{The case of 2 by 2 matrices}\label{app:claim}

In this paragraph, we justify the claim of the introduction about the model problem~\eqref{eq:2by2}. We follow arguments which are similar to those  of the proof of Lemma~\ref{lem:coefficient}.
Assume
\[
H(x,\xi)=\Theta_0 m(x)+\Theta_1 \xi_1 +\Theta_2 \xi_2
\]
with $\Theta_j\in\C^{2,2}$ self-adjoint and trace free constant matrices. 
Assume that the eigenvalues of $H(x,\xi)$ are the functions $\sqrt{m(x)^2+|\xi|^2}$ and $-\sqrt{m(x)^2+|\xi|^2}$. Then, there exists a unitary matrix $U_1$ such that 
\[
U_1 H(x,0) U_1^*=m(x)\begin{pmatrix}1 & 0\\0&-1\end{pmatrix}, \]
which implies
\[
H(x,\xi)=\begin{pmatrix}m(x)+a_1\xi_1+a_2\xi_2 &  b_1\xi_1+ b_2\xi_2\\\bar b_1\xi_1+\bar b_2\xi_2 &-m(x)-a_1\xi_1-a_2\xi_2\end{pmatrix} \]
with $a_1,a_2\in\R$ and $b_1,b_2\in\C$.
A quick analysis of the eigenvalues of $H(x,\xi)$ yields 
\[ m(x)^2 +\xi_1^2+\xi_2^2= (m(x)+a_1\xi_1+a_2\xi_2)^2 +|b_1\xi_1+b_2\xi_2|^2,\;\;\xi_1,\xi_2\in\R,\;\;x  \in\R^2.
\]
The identification of the coefficients of this polynomial in $\xi$ and $m(x)$ gives 
\[
a_1=a_2=0,\;\; |b_1|^2=|b_2|^2=1,\;\;{\rm Re}(b_1\bar b_2)=0,
\]
whence $b_1=\e^{i\theta}$ with $\theta\in\R$ and $b_2=\pm i\e^{i\theta}$. Taking $U_2={\rm diag}(\e^{-i\theta},1)$, we have 
\[
U_2U_1 H(x,\xi) U_1^* U_2^*= \begin{pmatrix}m(x) &  \xi_1\pm i \xi_2\\\xi_1\mp i\xi_2 &-m(x)\end{pmatrix}.
\]
\smallskip

We close this section with a remark about the map from $\R^2\times \R^2$ to $\R^3$, $\Phi: (x,\xi) \mapsto (m(x),\xi_1,\xi_2) $.
Consider $x_0\in E$. Since $\nabla m(x_0)\not=0$, there exists $r_0>0$ such that $\overline B(x_0,r_0)\times \R^2\subset \Phi(\R^2\times \R^2)$.
We can then write any point $(a,b,c)\in \R^3 $ as  $
(a,b,c)= \frac {|a|}{r_0} (m(x),\xi_1,\xi_2)$ for some $(x,\xi)\in\R^2\times \R^2 $. We deduce the following fact. 
\begin{remark}\label{rem:eigenvalue_abc}
Let $\Theta_0$, $\Theta_1$ and $\Theta_2$ be three $N\times N$ matrices. We suppose that for all $(x,\xi)\in\R^2\times\R^2$, the matrix $m(x) \Theta_0+\xi_1\Theta_1+\xi_2\Theta_2$ has the eigenvalue $\sqrt{m(x)^2+|\xi|^2}$. Then, for all $(a,b,c)\in\R^3$,  the matrix $a \Theta_0+b\Theta_1+c\Theta_2$ has the eigenvalue $\sqrt{a^2+b^2+c^2}$. The same property holds for the eigenvalues~$0$ and $-\sqrt{m(x)^2+|\xi|^2}$. 
\end{remark}

\section{Linear symplectomorphism and Fourier integral operators}\label{app:OIF}

Recall that a linear symplectomorphism is a map
\[ z=(x,\xi)\in T^*\R^d=\R^d\times \R^d\mapsto Az,
\]
where $A\in\R^{2d,2d} $ is a symplectic matrix, i.e. satisfying 
$\, ^tA JA=J$ for $J=\begin{pmatrix}0 & {\rm Id}_d\\ -{\rm Id}_d & 0\end{pmatrix}$.\\
A sympletic matrix $A$ satisfies ${\rm det}A=1$. Moreover $\,^tA$ is also a symplectomorphism, and we have the property
\begin{equation}\label{matrix}
\exists M, M'\in\C^{2d,2d},\;\;A={\rm Exp}(JM){\rm Exp}(JM'), \;\; \,^tM=M,\;\;\,^tM'=M'.
\end{equation}
 We discuss this property at the end of this section. 
In the following, ${\rm op}_h(a)$ denotes a semiclassical differential operator with symbol $a$. The reader can restrict itself to the case $d=2$ and
\[
{\rm op}_h (a)= \alpha m(x) +\beta hD_{x_1} + \gamma hD_{x_2},\;\;(\alpha,\beta,\gamma)\in\R^3. 
\]
Indeed,  we  only use the result for such $a$ in this paper.

\begin{proposition}\label{prop:metaplectic}
Let $A$ be a symplectic matrix. 
There exists a unitary operator $\Lambda^h_A\in\mathcal L(L^2(\R^d))$ such that one has the exact formula 
$$\Lambda^h_A {\rm op}_h(a)(\Lambda^h_A)^* = {\rm op}_h(a(Az)).$$
\end{proposition}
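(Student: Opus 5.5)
The plan is to build $\Lambda^h_A$ from quadratic Hamiltonians, using the factorization~\eqref{matrix}, and to establish the conjugation formula first for linear symbols. From there it passes to the Weyl exponentials $e^{i\ell/h}$, and then to arbitrary symbols $a$ by Fourier synthesis. Throughout, ${\rm op}_h$ is read as the Weyl quantization, and general symbols are handled in the weak sense, as operators $\mathcal S(\R^d)\to\mathcal S'(\R^d)$.

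\emph{Step 1: one exponential factor.} Fix a real symmetric $M$ and set $q_M(z)=\frac12 Mz\cdot z$. The operator ${\rm op}_h(q_M)$ is a real quadratic differential operator. It is essentially self-adjoint on $\mathcal S(\R^d)$; for instance, it is symmetric and admits a dense set of analytic vectors given by Hermite functions, so Nelson's theorem applies. Hence $U_M(t)=e^{-it\,{\rm op}_h(q_M)/h}$ is a unitary group preserving $\mathcal S(\R^d)$.

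For a linear symbol $\ell_w(z)=w\cdot z$, the Weyl calculus is exact on polynomials of degree $\le 2$:
\[
\tfrac ih[{\rm op}_h(q_M),{\rm op}_h(\ell_w)]={\rm op}_h(\{q_M,\ell_w\})\quad\text{with no remainder.}
\]
Moreover $\{q_M,\ell_w\}=\ell_{w'}$ for $w'$ linear in $w$ and given by $JM$, up to the sign conventions. Consequently $L(t)=U_M(t)^*{\rm op}_h(\ell_w)U_M(t)$ satisfies the linear ODE
\[
\dot L={\rm op}_h\bigl(\ell_{(JM)^\top w}\bigr)\text{-type equation on }\mathcal S(\R^d).
\]
Its unique solution is ${\rm op}_h\bigl(\ell_w\circ {\rm Exp}(tJM)\bigr)$. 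The uniqueness step is checked by differentiating $U_M(t)\,{\rm op}_h(\ell_w\circ{\rm Exp}(tJM))\,U_M(t)^*$ applied to Schwartz functions and showing that the derivative vanishes. Taking $t=1$ (or $t=-1$, depending on sign conventions) gives the formula for linear $a$ with $A={\rm Exp}(JM)$.

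\emph{Step 2: general symbols.} The Weyl operators $T(w)={\rm op}_h(e^{i\ell_w/h})=e^{i{\rm op}_h(\ell_w)/h}$ are the unitaries generated by the self-adjoint operators ${\rm op}_h(\ell_w)$. Unitary conjugation commutes with the functional calculus, so Step 1 yields
\[
\Lambda T(w)\Lambda^*=e^{i\,{\rm op}_h(\ell_w\circ A)/h}={\rm op}_h\bigl(e^{i\ell_w\circ A/h}\bigr).
\]
For $a\in\mathcal S(\R^{2d})$ one writes
\[
{\rm op}_h(a)=(2\pi h)^{-2d}\int \widehat a(w/h)\,T(w)\,dw
\]
as a Bochner integral in $\mathcal L(L^2)$, with a suitable normalization of $\widehat a$. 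Conjugating under the integral and performing the linear change of variables $w\mapsto A^{\top}w$ (which has Jacobian $\det A=1$) gives ${\rm op}_h(a\circ A)$. For tempered symbols $a\in\mathcal S'$, both sides are defined through the duality pairing
\[
\langle {\rm op}_h(a)f,g\rangle=\langle a, W_h(g,f)\rangle,
\]
where $W_h$ is the cross-Wigner function. The Schwartz case shows that $W_h(\Lambda^*g,\Lambda^*f)=W_h(g,f)\circ A^{-1}$, and the formula extends to all $a$ by density and continuity. Differential operators such as $\alpha m(x)+\beta hD_{x_1}+\gamma hD_{x_2}$ are covered by this extension.

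\emph{Step 3: composition.} By~\eqref{matrix}, $A={\rm Exp}(JM){\rm Exp}(JM')$. Set $\Lambda^h_A=\Lambda_{M'}\Lambda_{M}$, with the factors ordered so that
\[
a\mapsto a\circ{\rm Exp}(JM)\circ{\rm Exp}(JM')=a\circ A.
\]
Applying Step 2 twice then gives the result. The operator $\Lambda^h_A$ is unitary as a product of unitaries.

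The main obstacle is Step 1 in rigorous form: the essential self-adjointness of ${\rm op}_h(q_M)$, the invariance of $\mathcal S$ under $U_M(t)$, and the justification of differentiating the Heisenberg-evolved operator on a common core. I would handle these by checking everything on Hermite functions. Alternatively, one can bypass the issue by conjugating $q_M$ to a normal form, where $U_M$ is explicit as a dilation, a Fourier transform, a multiplication by a chirp $e^{i\phi(x)/h}$, or a harmonic oscillator flow.
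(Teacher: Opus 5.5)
Your proposal is correct and has the same skeleton as the paper's proof. You factor $A$ through \eqref{matrix}, realize each ${\rm Exp}(JM)$ by the unitary group generated by ${\rm op}_h(\frac12 Mz\cdot z)$, and use the exactness of the Weyl commutator with a quadratic symbol to get a remainder-free Egorov identity, then compose.

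The difference is in how you reach general symbols. The paper applies $\frac ih[{\rm op}_h(p),{\rm op}_h(b)]={\rm op}_h(\{p,b\})$ directly to every admissible $b$. This is exact for any $b$ when $p$ is quadratic, because the surviving odd Moyal terms of order $\geq 3$ involve derivatives of order $\geq 3$ of $p$. So one ODE argument, comparing $B(t)$ and $C(t)$, handles all symbols at once. You use the identity only for linear symbols, and then pass to general $a$ through the Weyl operators $T(w)$, the functional calculus, Fourier synthesis and Wigner duality.

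Your route is longer, but it has two advantages:
\begin{itemize}
\item You only need the Heisenberg equation for a finite-dimensional family of linear symbols, where it can be checked on $\mathcal S$. The extension to tempered symbols then goes through bounded unitaries and duality.
\item You make explicit the points the paper passes over: essential self-adjointness, invariance of $\mathcal S$, and uniqueness for the operator ODE. Differentiating $U(t)\,{\rm op}_h(\ell_w\circ{\rm Exp}(tJM))\,U(t)^*$ is exactly what the paper's phrase ``they satisfy the same ODE'' silently relies on.
\end{itemize}

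Your ordering $\Lambda^h_A=\Lambda_{M'}\Lambda_M$ is also the correct one. Conjugation by $\Lambda_M$ acts by $a\mapsto a\circ{\rm Exp}(JM)$, so the paper's choice $\Lambda^h_{A_M}\Lambda^h_{A_{M'}}$ actually produces ${\rm op}_h\bigl(a\circ{\rm Exp}(JM'){\rm Exp}(JM)\bigr)$ rather than ${\rm op}_h(a\circ A)$. This slip is harmless for the existence claim, and your version avoids it.
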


The operator $\Lambda^h_A$ is  sometimes called the \emph{metaplectic transform} associated with the matrix $A$. All metaplectic transforms associated with $A$ are of the form ${\rm e}^{ i\lambda} \Lambda^h_A$ for $\lambda\in\R$.

\begin{proof}
In view of the decomposition~\eqref{matrix},  it is enough to prove the result when $A_M={\rm Exp}(JM)$ for a symmetric matrix  $M$. We will then set $\Lambda^h_A= \Lambda^h_{A_{M}}\Lambda^h_{A_{M'}}$.
In order to prove the result for the matrix $A_M$, we consider
 the propagator 
\[\Lambda_M^h(t) = \e^{\frac ih t P^h}\;\;\mbox{where}\;\; P^h= \mathrm{op}_h(p), \text{ with } p(x,\xi)=\frac12 M^t(x,\xi).^t(x,\xi).
\]
We then have for any admissible symbol $a$ 
\begin{equation}\label{egorov}
\Lambda_M^h(t) {\rm op}_h(a) \Lambda_M^h(t)^*={\rm op}_h\left( a\circ({\rm Exp}(tJM))\right).
\end{equation}
Indeed, since $p$ is quadratic, for any admissible symbol $b$, one has $\frac{i}{h}[P^h,\mathrm{op}_h(b)]=\mathrm{op}_h(\lbrace p,b\rbrace)$. Therefore, the operators
$B(t)=\Lambda_M^h(t) {\rm op}_h(a) \Lambda_M^h(t)^*$ and $C(t)={\rm op}_h\left( a\circ(\Phi_t)\right)$, where $\Phi_t={\rm Exp}(tJM)$ satisfies the same ODE. Since they coincide when $t=0$, we deduce the relation~\eqref{egorov}
We then observe that 
it is enough to take $\Lambda^h_{A_M}:=\Lambda^h_{M}(1)$ to conclude the proof.
\end{proof}

\begin{proof}[Proof of~\eqref{matrix}]
One uses the polar decomposition of $A$ : $A=QN$ with $Q$ orthogonal and $N$ symmetric positive definite. Using $A^{-1}=-J\,^t A J$, we obtain a polar decomposition of $A^{-1}$:
$$A^{-1}= -J\ N  \,^t Q J = (-J\, N J) (-J\,^tQJ).$$
Indeed, $-J\, N J$ is symmetric positive definite and $-J\,^tQJ$ orthogonal.
By the uniqueness of the polar decomposition, we deduce 
\begin{equation}\label{prop:QN}
Q^{-1}=-J\, ^t Q J\;\;\mbox{and}\;\; N^{-1}= -J\, N J.
\end{equation}
Since $N$ is symmetric positive definite, we can write $N={\rm Exp} (JM')$ with $\,^t(JM')= JM'$. Moreover, from~\eqref{prop:QN}, we deduce 
$$N^{-1}={\rm Exp} (-JM') = -JNJ=-J{\rm Exp} (JM') J={\rm Exp} (M' J),$$
whence $-JM'=M'J$ and $\, ^t M'= JM'J= M'$.\\
For constructing $M$, one diagonalizes in $\C^{2d}$ the real-valued orthogonal matrix $Q$ as a unitary matrix. One  writes 
$$Q= P \,{\rm Diag} (e^{i\theta_1},\cdots, {\rm e}^{i\theta_{2d} }) P^*, $$
with $(\theta_j)_{j\in\lbrace 1,\cdots 2d\rbrace}\subset\R$ and $P$ unitary. We define
$$JM:=P \,{\rm Diag} (i\theta_1,\cdots, i\theta_{2d}) P^*.$$
By construction $Q={\rm Exp} (JM)$. 
 We derive from $\,^tQ =Q^{-1}$ that ${\rm Exp}(-^t MJ)={\rm Exp}(-JM)$, and we deduce $\, ^t MJ=-JM.$
Then, we inject in~\eqref{prop:QN} the relations $Q^{-1}={\rm Exp} (-JM)$ and $-J\,^tQ J=-J{\rm Exp}(-^tMJ)J$, 
$${\rm Exp} (-JM)=-J{\rm Exp}(-^tMJ)J=-J{\rm Exp}(-JM)J={\rm Exp} (-MJ),$$ where we have again used $^t MJ=-JM$  and the definition of the exponential of matrices. Finally, we are left with 
$J M=MJ$ and 
$\, ^t M=-JMJ= M.$ Thus $M$ is also symmetric. 
\end{proof}

%%%%%%%%%%%%%%%%%%%%%%%%%%%%%%%%%

\bibliographystyle{plain}
\bibliography{biblio}

\end{document}